\journal{Journal of Mathematical Analysis and Applications }
\begin{document}
\sloppy

\newcommand{\Lip}{\operatorname{Lip}}
\newcommand{\lip}{\operatorname{lip}}
\newcommand{\LIP}{\operatorname{\mathbb{L}ip}}
\newcommand{\Int}{\operatorname{Int}}

\newtheorem{theorem}{Theorem}[section]
\newtheorem{lemma}[theorem]{Lemma}
\newtheorem{proposition}[theorem]{Proposition}
\newtheorem{corollary}[theorem]{Corollary}
\newtheorem{remark}[theorem]{Remark}

\theoremstyle{definition}
\newtheorem{definition}{Definition}
\newtheorem{example}{Example}
\newtheorem{problem}{Problem}

\begin{frontmatter}

%% Title, authors and addresses

%% use the tnoteref command within \title for footnotes;
%% use the tnotetext command for theassociated footnote;
%% use the fnref command within \author or \affiliation for footnotes;
%% use the fntext command for theassociated footnote;
%% use the corref command within \author for corresponding author footnotes;
%% use the cortext command for theassociated footnote;
%% use the ead command for the email address,
%% and the form \ead[url] for the home page:
%% \title{Title\tnoteref{label1}}
%% \tnotetext[label1]{}
%% \author{Name\corref{cor1}\fnref{label2}}
%% \ead{email address}
%% \ead[url]{home page}
%% \fntext[label2]{}
%% \cortext[cor1]{}
%% \affiliation{organization={},
%%             addressline={},
%%             city={},
%%             postcode={},
%%             state={},
%%             country={}}
%% \fntext[label3]{}
 
\title{Takagi--van der Waerden functions in  metric spaces and their Lipschitz derivatives\tnoteref{Grant}}
\tnotetext[Grant]{
    The research was supported by the University of Silesia in Katowice, Mathematics Department (Iterative Functional Equations
and Real Analysis program)
	%This research did not receive any specific grant from funding agencies in the public, commercial, or not-for-profit sectors.
	}
%% use optional labels to link authors explicitly to addresses:
%% \author[label1,label2]{}
%% \affiliation[label1]{organization={},
%%             addressline={},
%%             city={},
%%             postcode={},
%%             state={},
%%             country={}}
%%
%% \affiliation[label2]{organization={},
%%             addressline={},
%%             city={},
%%             postcode={},
%%             state={},
%%             country={}}

\author[us,chnu]{Oleksandr V. Maslyuchenko \orcidlink{0000-0002-1493-9399}\corref{CorrAuthor}} %% Author name
\ead{ovmasl@gmail.com}
\cortext[CorrAuthor]{Corresponding author}
\author[us]{Ziemowit M. Wójcicki \orcidlink{0009-0005-6433-0708}}
\ead{ziemo1@onet.eu}
%% Author affiliation
\affiliation[us]
{organization={Institute of Mathematics, University of Silesia in Katowice},
%Department and Organization
addressline={Bankowa 12}, 
            city={Katowice},
            postcode={40-007}, 
            %state={},
            country={Poland}}
\affiliation[chnu]
{organization={Department of Mathematics and Informatics, Yuriy Fedkovych Chernivtsi National University},
	%Department and Organization
	addressline={Kotsiubynskoho  2}, 
	city={Chernivtsi},
	postcode={58012}, 
	%state={},
	country={Ukraine}}
%% Abstract
\begin{abstract}
%% Text of abstract
We introduce the Takagi--van der Waerden function with parameters $a{>}b{>}0$  by setting $f_{a,b}(x)=\sum\limits_{n=1}^\infty b^n d\big(x,S_n\big)$, where $S_n$ is a maximal  $\frac1{a^n}$-separated set in a metric space $X$. So, if $X=\mathbb R$ and $S_n=\frac1{a^n}\mathbb Z$ then $f_{2,1}$ is the Takagi function and $f_{10,1}$ is the van der Waerden function  which are the famous examples of nowhere differentiable functions. Then we prove that the big Lipschitz derivative $\mathrm{Lip} f_{a,b}(x)=+\infty$ if $a>b>2$ and $x$ is a non-isolated point of $X$. Moreover, if the shell porosity $p^s(X,x)<\lambda<1$ for some $\lambda$ and each non-isolated point $x\in X$ then the little Lipschitz derivative $\mathrm{lip} f_{a,b}(x)=+\infty$ for large enough $a>b$ and any non-isolated point $x\in X$. In particular, this is true for any normed space. Finally, we prove that for any open set $A$ in a metric (normed) space $X$ without isolated points there exists a continuous function $f$ such that $\mathrm{Lip} f(x)=+\infty$ (and $\mathrm{lip} f(x)=+\infty$) exactly on $A$.
\end{abstract}

%%%Graphical abstract
%\begin{graphicalabstract}
%%\includegraphics{grabs}
%\end{graphicalabstract}

%%%Research highlights
%\begin{highlights}
%\item Research highlight 1
%\item Research highlight 2
%\end{highlights}

%% Keywords
\begin{keyword}
%% keywords here, in the form: keyword \sep keyword
big Lipschitz derivative \sep
small  Lipschitz derivative \sep
local Lipschitz derivative \sep
Takagi--van der Waerden  function \sep
semicontinuous function \sep
$F_\sigma$-semicontinuous function \sep
pair of Hahn \sep
$F_\sigma$-pair of Hahn \sep
shell porosity \sep
hermeticity \sep
hermetic space

%% PACS codes here, in the form: \PACS code \sep code

%% MSC codes here, in the form: \MSC code \sep code
%% or \MSC[2008] code \sep code (2000 is the default)
\MSC[2020] 
46G05 %Derivatives of functions in infinite-dimensional spaces
(Primary)
\sep
46T20 %Continuous and differentiable maps in nonlinear functional analysis
\sep
26A16 %Lipschitz (Hölder) classes 
\sep
26A21 %Classification of real functions; Baire classification of sets and functions
(Secondary)

%54C35 %Function spaces in general topology
%\sep
%54D30 %Compactness
%\sep
%54A25 %Cardinality properties (cardinal functions and inequalities, discrete subsets)
%\sep
%54C08 %Weak and generalized continuity
%\sep
%54C25 %Embedding
\end{keyword}

\end{frontmatter}

%% Add \usepackage{lineno} before \begin{document} and uncomment 
%% following line to enable line numbers
%% \linenumbers

%% main text
%%

\section{Introduction}

The classical Rademacher theorem \cite[Theorem 3.2]{EvGa} asserts that every Lipschitz mapping $f:\mathbb{R}^m\to\mathbb{R}^n$ is differentiable almost everywhere.  Actually,  Rademacher's result \cite[Satz I]{Ra} and its generalization \cite{St} concern the two-dimensional case  but they are applicable not only for Lipschitz functions. In particular, the Stepanoff theorem \cite[Satz]{St} asserts that a continuous function $f$ is  almost everywhere differentiable on the complement of the set  ${L^\infty(f)=\big\{x:\Lip f(x)=\infty\big\}}$
where $\Lip f(x)=\limsup\limits_{u\to x} \frac{1}{\|u-x\|}\big\|f(u)-f(x)\big\|$ is the, so called, \textit{big Lipschitz derivative} (or, shortly, \textit{big lip}). Malý \cite{Mal} gave a simple proof of the Stepanoff theorem for a function $f:X\to\mathbb{R}^n$, where $X\subseteq\mathbb{R}^m$. Besides, there are many generalizations of the Rademacher theorem for Banach spaces (see, for example, \cite{LiPr,MalZa}). If we consider the \textit{little Lipschitz derivative} (or, shortly, \textit{little lip}) $\lip f(x)=\liminf\limits_{r\to 0}\sup\limits_{\|u-x\|<r}\frac1r\big\|f(u)-f(x)\big\|$ instead of $\Lip f$ then $f$ need not to be almost everywhere differentiable on the complement of $\ell^\infty(f)=\big\{x:\lip f(x)=\infty\big\}$ (see \cite[Theorem 1.4]{BaCs}). 
It is well-known that every differentiable function is \textit{pointwise Lipschitz} (that is $\Lip f<+\infty$). But $C^1$-functions have a stronger property: every such function is \textit{locally Lipschitz}. This means that its \textit{local Lipschitz derivative} $\LIP f(x)=\limsup\limits_{(u,v)\to (x,x)}\frac{1}{\|u-v\|}\big\|f(u)-f(v)\big\|<+\infty$ for any $x$.
So, the set $\mathbb L^\infty(f)=\big\{x:\LIP=\infty\big\}$ is also interesting for us.

In this paper we deals with the following general problem.

\begin{problem}[\textit{$\ell$-$L$-$\mathbb L$-problem}]\label{prob:ellLbL1}
	Let $X$ be a metric space. For which sets $A,B,C\subseteq X$ does there exist a continuous function $f\colon X\to\mathbb{R}$ such that $\ell^\infty(f)=A$, $L^\infty(f)=B$  and $\mathbb{L}^\infty(f)=C$?
\end{problem}
\noindent Also we may  consider some partial versions of this problem by omitting one or two of the sets $\ell^\infty(f)$, $L^\infty(f)$ and $\mathbb{L}^\infty(f)$. For example, the \textit{$\ell$-problem} means the description of the set $\ell^\infty(f)$ for a continuous function $f$; the \textit{$\ell$-$L$-problem} means the description of the sets $\ell^\infty(f)$ and $L^\infty(f)$ and so on. In the case where $X=\mathbb R$ the $L$-problem  was solved in \cite[Theorem 3.35 and Lemma 2.4]{BuHaRmZu} where the authors proved that $A$  has the property $A=L^\infty(f)$ (even $A=\ell^\infty(f)=L^\infty(f)$) for some continuous function $f$ if and only if $A$ is a $G_\delta$-set. Moreover, they proved  in \cite[Theorem 3.1]{BuHaRmZu} that for any $F_\sigma$-set $A$ there is a continuous function $f$ with $\ell^\infty(f)=A$. But the obvious necessary condition is that $\ell^\infty(f)$ is $F_{\sigma\delta}$. So, the $\ell$-problem is still open even for $\mathbb R$. Hanson \cite{Ha} solved $\ell$-$L$-problem in the case where $A=\ell^\infty(f)=\varnothing$. Another partial answer to the $\ell$-problem one can find in the recent paper \cite{RmZu} where the author proved that for any $F_{\sigma\delta}$-set $A\subseteq\mathbb R$ of Lebesgue measure zero there is an increasing absolutely continuous function $f\colon\mathbb R\to\mathbb R$ such that $\ell^\infty(f)=A$. We are also interested in the following more general problem.
\begin{problem}[\textit{$\lip$-$\Lip$-$\LIP$-problem}] Let $X$ be a metric space. For which functions $u,v,w\colon X\to\mathbb{R}$ does there exists a continuous function $f\colon X\to\mathbb{R}$ such that $\lip f =u$, $\Lip f=v$ and $\LIP f=w$?
\end{problem}

Of course, we may omit some of the functions $\lip f$, $\Lip f$, $\LIP f$ and consider simpler versions of the previous problem like\textit{ $\lip$-problem}, \textit{$\lip$-$\Lip$-problem}  and so on. In \cite{BuHaMaVe} the authors solved the $\lip$-problem for a function $u=\mathbf{1}_E$ where $E\subseteq\mathbb R$ and characterized the, so called, $\lip 1$ subsets of $\mathbb R$.

As we have seen, the current results in this direction concern the case ${X=\mathbb R}$. 
Therefore, we are going to extend some of the previous results to general metric spaces.
Our approach to this problem is based on a generalization of the 
Takagi-van der Waerden function to a metric space. 
The \textit{classical Takagi-van der Waerden function of type} 
$a>1$ (see, for example, \cite{Allaart}) is, by definition, the
function $f_a(x)=\sum\limits_{n=1}^\infty d\big(x,\frac{1}{a^n}\mathbb Z\big)$,  
where $d(x,E)=\inf\limits_{y\in E}|x-y|$ for any $x\in \mathbb R$ and
$E\subseteq\mathbb R$. In the case $a=2$ or $a=10$ we obtain two 
famous examples of nowhere differentiable functions which were 
constructed independently by Takagi and van der Waerden. 
We replace the set $\frac{1}{a^n}\mathbb Z$ by a maximal 
\mbox{$\frac{1}{a^n}$-separated} set $S_n$ in a metric space $X$, 
add a parameter $b<a$ and define a \textit{Takagi-van der Waerden function of type} $(a,b)$ as $f_{a,b}(x)=\sum\limits_{n=1}^\infty b^n d\big(x,S_n\big)$, $x\in X$. Then we prove that $\Lip f_{a,b}(x)=+\infty$ if $a>b>2$ and $x$ is a non-isolated point of $X$. To calculate the little Lipschitz derivatives we need some special property of o metric space which we call \textit{hermeticity}. It means that the \textit{shell porosity} (see \cite{Va}) $p^s(X,x)\le\lambda<1$ for some $\lambda$ and any  non-isolated
point $x$ in $X$. In particular, every normed space is hermetic. We prove that for any hermetic space $X$ there are $a>b>1$ such that $\lip f_{a,b}(x)=+\infty$ for any non-isolated point $x\in X$. So, in the last section we give a partial answer to the $\ell$-$L$-$\mathbb L$-problem for a hermetic space $X$, open sets $A=B$ and $C=\overline{A}$.

\section{Lipschitz derivatives}

Let $X$ be a metric space, $a\in X$ and $\varepsilon>0$. We always denote the metric on $X$ by $|\,\cdot\,-\,\cdot\,|_X$ and
\begin{align*}
	B(a,\varepsilon)&=B_X(a,\varepsilon)=\big\{x\in X\colon|x-a|_X<\varepsilon\big\},\\
	B[a,\varepsilon]&=B_X[a,\varepsilon]=\big\{x\in X\colon|x-a|_X\le\varepsilon\big\}.
\end{align*}

\begin{definition}
	Let $X$ and $Y$ be metric spaces, $f:X\to Y$ be a function, $x\in X$. Denote
	\begin{itemize}
		\item $\|f\|_{\lip}=\sup\limits_{u\ne v\in X}\frac1{|u-v|_X}\big|f(u)-f(v)\big|_Y$
		\item $\LIP f(x)=\limsup\limits_{(u,v)\to (x,x)}\frac1{|u-v|_X}\big|f(u)-f(v)\big|_Y$
		\item $\Lip f(x)=\limsup\limits_{u\to x}\frac1{|u-x|_X}\big|f(u)-f(x)\big|_Y$;
		\item $\lip f(x)=\liminf\limits_{r\to0^+}\sup\limits_{u\in B(x,r)}\frac1r\big|f(u)-f(x)\big|_Y$;
	\end{itemize}
The number $\|f\|_{\lip}$ is \emph{Lipschitz constant of $f$}.  The functions $\LIP f$,  $\Lip f$  and $\lip f$ are called the \emph{local, big and little Lipschitz derivative} respectively.
\end{definition}
We denote by $X^d$ the set of all non-isolated points of $X$. During the whole paper we assume that $\sup\varnothing=0$. As a consequence of this assumption we have $\LIP f(x)=\Lip f(x)=\lip f(x)=0$ for any $x\in X\setminus X^d$. 

Obviously, if $Y$ is a normed space then  $\|\cdot\|_{\lip}$ is an extended seminorm on $Y^X$ in the sense \cite{SaTaGa}. Moreover, $\|\cdot\|_{\lip}$ is a norm on the space all  Lipschitz functions $f\colon {X\to Y}$ vanishing at some fixed point in $X$.

We introduce some auxiliary notations:
	\begin{itemize}
		\item $\LIP^rf(x)=\big\|f|_{B(x,r)}\big\|_{\lip}=\sup\limits_{u\ne v\in B(x,r)}\frac1{|u-v|_X}\big|f(u)-f(v)\big|_Y$
		\item $\Lip^r f(x)=\sup\limits_{u\in B(x,r)}\frac1r\big|f(u)-f(x)\big|_Y, \Lip^r_+f(x)=\sup\limits_{u\in B[x,r]}\tfrac1r\big|f(u)-f(x)\big|_Y$
		\item $\Lip_r f(x)=\sup\limits_{0<\varrho<r}\Lip^\varrho f(x),\ \ \ \ \ \lip_r f(x)=\inf\limits_{0<\varrho<r}\Lip^\varrho f(x)$;
	\end{itemize}
Therefore, the definitions of the Lipschitz derivatives might be rewritten as follows
\begin{align}
	\label{eqn:ILip_definition}
        \LIP f(x)&=\inf\limits_{r>0}\LIP^rf(x)\\
        \label{eqn:lip_definition}
        \lip f(x)&=\liminf\limits_{r\to0^+}\Lip^r f(x)
\end{align}

Some authors (see, for example, \cite{Ha,BuHaMaVe,BuHaRmZu}) define  $\Lip f$  and $\lip f$ use the function
$\Lip^r_+f$ instead of $\Lip^rf$. In the case where $X$ is a normed space we have $B[x,r]=\overline{B(x,r)}$. 
Therefore, $\Lip^rf(x)=\Lip^r_+f(x)$ for any continuous function $f$.
But the previous equality does not hold for the discrete metric on $X$, nonconstant $f$ and $r=1$. However, we have the following.

\begin{proposition}
    Let $X$ and $Y$ be metric spaces and $f\colon X\to Y$ be a function.
    Then, for any non-isolated point $x\in X$, the following equalities hold \[
        \Lip f(x)=\limsup_{r\to 0^{+}}\Lip^rf(x)=\limsup_{r\to 0^{+}}\Lip_{+}^rf(x).
    \]
\end{proposition}
\begin{proof}
    Denote 
    \begin{align*}
        \alpha(r) &= \sup_{0<\rho<r}\Lip^{\rho} f(x), \\
        \beta(r)  &= \sup_{0<\rho<r}\Lip_{+}^{\rho} f(x), \\
        \gamma(r) &= \sup\left\{\frac{\vert f(u)-f(x)\vert_Y }{\vert u - x\vert_X}\colon u\in X, \; 0<\vert u-x\vert_X<r\right\}.
    \end{align*}
    Since $B(x,r)\subseteq B[x,r]$, we have $\alpha(r)\leq\beta(r)$. Next, we have
    \begin{align*}
        \beta(r) &= \sup_{0<\rho<r}\sup_{0<\vert u-x\vert_X\leq\rho}\tfrac{1}{\rho}\vert f(u)-f(x)\vert_Y \\
                 &\leq \sup_{0<\rho<r}\sup_{0<\vert u-x\vert_X\leq\rho}\tfrac{1}{\vert u-x\vert_X}\vert f(u)-f(x)\vert_Y = \gamma(r).
    \end{align*}
    On the other hand, we have
    \begin{align*}
        \gamma(r) &= \sup_{0<\rho<r}\sup_{\vert u-x\vert_X=\rho}\tfrac{1}{\rho}\vert f(u)-f(x)\vert_Y \\
                  &\leq \sup_{0<\rho<r}\sup_{0<\vert u-x\vert_X\leq\rho}\tfrac{1}{\rho}\vert f(u)-f(x)\vert_Y \\
                  &= \sup_{0<\rho<r}\Lip_{+}^{\rho}f(x) = \beta(r).
    \end{align*}
    We have shown, that $\alpha(r)\leq\beta(r)=\gamma(r)$ for $r>0$.
    It remains to show that $\alpha(r)=\beta(r)$, $r>0$.
    Let us assume that there exists $r_0>0$, such that $\alpha(r_0)<\beta(r_0)$.
    Denote $\varphi(r)=\Lip^r f(x)$ and $\psi(r)=\Lip_{+}^r f(x)$. Observe, that
    \begin{equation}\label{eq:some_nonsense}
        \rho\varphi(\rho)\leq\rho\psi(\rho)\leq r\varphi(r), \text{ for }  0<\rho<r.
    \end{equation}
    Since $\beta(r_0)=\sup\limits_{r<r_0}\psi(r)>\alpha(r_0)$, 
    there exists $r_1<r_0$ such that $\alpha(r_0)<\psi(r_1)$.
    Let $\varepsilon=\psi(r_1)-\alpha(r_0)>0$. Then, for any $r<r_0$,
    $\varphi(r)+\varepsilon\leq\alpha(r_0)+\varepsilon=\psi(r_1)$, so
    \begin{equation}\label{eq:some_more_nonsense}
        r_1\varphi(r)+r_1\varepsilon\leq r_1\psi(r_1)\leq r\varphi(r), \text{ for } r_1<r<r_0,
    \end{equation}
    where the second inequality follows from \eqref{eq:some_nonsense}.
    Note that
    \[
        \varphi(r)=\tfrac{1}{r}r\varphi(r)\leq\tfrac{1}{r_1}r_0\varphi(r_0), \text{ for } r_1<r<r_0,
    \]
    so, the function $\varphi$ is bounded on the interval $(r_1;r_0)$.
    However, by \eqref{eq:some_more_nonsense}, we have
    \[
        0 = \lim_{r\to r_{1}^{+}}(r-r_1)\varphi(r)\geq r_1\varepsilon >0,
    \]
    which is impossible. We have $\alpha(r)=\beta(r)=\gamma(r)$ for $r>0$.
    But $$\displaystyle\limsup_{r\to 0^{+}}\Lip^r f(x)=\inf_{r>0}\alpha(r),$$
    $$\displaystyle\limsup_{r\to 0^{+}}\Lip_{+}^r f(x)=\inf_{r>0}\beta(r)$$
    and $$\Lip f(x)=\inf\limits_{r>0}\gamma(r)$$ and the proof is finished.
\end{proof}

%On the other hand, for an arbitrary metric space $X$, $x\in X$ and $r>0$ we have that 
%\begin{align*}
%r\Lip^rf(x)&=\sup\limits_{u\in B(x,r)}\big|f(u)-f(x)\big|_Y\\
%&=\sup\limits_{0<\varrho<r}\sup\limits_{u\in B(x,\varrho)}\big|f(u)-f(x)\big|_Y\\
%&=\sup\limits_{0<\varrho<r}\varrho\Lip^\varrho f(x).
%\end{align*}
%Therefore,
%$$\varrho\Lip^\varrho f(x)\le
%\sup\limits_{u\in B[x,\varrho]}\big|f(u)-f(x)\big|_Y= \varrho\Lip^\varrho_+ f(x)\le r\Lip^r f(x)$$
%for any $0<\varrho<r$ and $\lim\limits_{\varrho\to r^-}\varrho\Lip^\varrho f(x)=r\Lip^r f(x)$. Consequently,
%$$\lim\limits_{\varrho\to r^-}\Lip^\varrho_+f(x)=\Lip^rf(x)\le \Lip^r_+f(x)$$
%for any $r>0$. Thus, we can easily prove that the following assertions for any metric spaces $X$ and $Y$, $f:X\to Y$ and $x\in X$: 
Note, that
\begin{align}
	\label{eqn:Lip_monotone}
	\Lip_r f(x)&\le \Lip_{r'}f(x)
	\ \ \text{and}\ \ 
	\lip_r f(x)\ge \lip_{r'}f(x) \text{ if }0<r<r',
\end{align}
So, the definitions and the previous proposition yield
\begin{align}
	\label{eqn:Lip_as_limsup}\Lip f(x)&=\inf\limits_{r>0}\Lip_rf(x)=\lim\limits_{r\to0^+}\Lip_rf(x),\\
	\label{eqn:lip_as_sup_lip_r}\lip f(x)&=\sup\limits_{r>0}\lip_rf(x)=\lim\limits_{r\to0^+}\lip_rf(x).
\end{align}
Therefore, it is easy to see that the following inequalities hold.
\begin{align}
	 \label{eqn:Lip_r_estimations}\lip_r f(x)&\le \Lip_r f(x)\le\LIP^r f(x)\text{ for any }r>0,\\
	\label{eqn:Lip_estimations}\lip f(x)&\le \Lip f(x)\le\LIP f(x).
\end{align}

\begin{definition}
	Let $X$ and $Y$ be metric spaces and $\gamma\ge 0$. A function ${f\colon X\to Y}$ is called
	\begin{itemize}
		\item  \emph{$\gamma$-Lipschitz}  if $\|f\|_{\lip}\le\gamma$;
		\item \emph{Lipschitz} if $\|f\|_{\lip}<\infty$;
		\item \emph{locally Lipschitz} if $\LIP f<\infty$. 
		\item \emph{pointwise Lipschitz} if $\Lip f<\infty$;
		\item \emph{weakly pointwise Lipschitz} if $\lip f<\infty$.
	\end{itemize} 
	Denote
	\begin{itemize}
		\item $\mathbb{L}(f)=\big\{x\in X:\LIP f(x)<\infty \big\}$;
		\item $\mathbb{L}^\infty(f)=\big\{x\in X:\LIP f(x)=\infty \big\}=X\setminus \mathbb{L}(f)$;
		\item $L(f)=\big\{x\in X:\Lip f(x)<\infty \big\}$;
		\item $L^\infty(f)=\big\{x\in X:\Lip f(x)=\infty \big\}=X\setminus L(f)$;
		\item $\ell(f)=\big\{x\in X:\lip f(x)<\infty \big\}$;
		\item $\ell^\infty(f)=\big\{x\in X:\lip f(x)=\infty \big\}=X\setminus\ell(f)$;
	\end{itemize}
\end{definition}

Inequalities (\ref{eqn:Lip_estimations}) yield the next assertion.

\begin{proposition}\label{prop:properties_Lip_lip}
	Let $X$ and $Y$ be metric spaces, and $f:X\to Y$ be a function. Then $\mathbb{L}(f)\subseteq L(f)\subseteq\ell(f)$ and $\ell^\infty(f)\subseteq L^\infty(f)\subseteq\mathbb{L}^\infty(f)$.
\end{proposition}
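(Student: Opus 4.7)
The statement is a purely pointwise consequence of the chain of inequalities (\ref{eqn:Lip_estimations}), namely $\lip f(x)\le\Lip f(x)\le\LIP f(x)$ for every $x\in X$, which has already been recorded. The plan is therefore to read off each inclusion directly from this pointwise comparison.

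For the first chain, I would fix $x\in\mathbb{L}(f)$, so $\LIP f(x)<\infty$, and then observe that (\ref{eqn:Lip_estimations}) gives $\Lip f(x)\le\LIP f(x)<\infty$, whence $x\in L(f)$. The same reasoning, using $\lip f(x)\le\Lip f(x)$, proves $L(f)\subseteq\ell(f)$. The second chain is then obtained by taking complements in $X$: since $\ell^\infty(f)=X\setminus\ell(f)$, $L^\infty(f)=X\setminus L(f)$ and $\mathbb{L}^\infty(f)=X\setminus\mathbb{L}(f)$, the inclusion $\mathbb{L}(f)\subseteq L(f)\subseteq\ell(f)$ flips to $\ell^\infty(f)\subseteq L^\infty(f)\subseteq\mathbb{L}^\infty(f)$.

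There is no real obstacle here: the only nontrivial content sits in the inequalities (\ref{eqn:Lip_estimations}) themselves, which are among the identities and estimates already assembled in the excerpt from the basic definitions of $\lip^r$, $\Lip^r$ and $\LIP^r$. Given that those are in hand, the proof of the proposition is a one-line verification at each point $x$, followed by passage to complements.
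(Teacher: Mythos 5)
Your proof is correct and is exactly the argument the paper has in mind: the paper states only that ``The inequalities (\ref{eqn:Lip_estimations}) yield the next assertion,'' which is precisely your pointwise comparison $\lip f(x)\le\Lip f(x)\le\LIP f(x)$ followed by passing to complements.
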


\section{Classification of the Lipschitz derivatives and pairs of Hahn}

Now we pass to the investigation of the type of semicontinuity of Lipschitz derivatives of continuous functions. 
In \cite{BuHaRmZu} semicontinuity of Lipschitz derivatives of a continuous function 
$f:\mathbb{R}\to\mathbb{R}$ was obtained from the continuity of $\Lip^rf$. 
But in the general situation this function need not be continuous. 
Therefore, we prove semicontinuity of  Lipschitz derivatives directly by the definitions.

\begin{definition}
	Let $X$ be a topological space and $f,g:X\to\overline{\mathbb{R}}$. We say that
	\begin{itemize}
		\item $f$ is \emph{lower semicontinuous} if  $f^{-1}\big((\gamma;+\infty]\big)$ is an open set for any $\gamma\in\overline{\mathbb{R}}$;
		\item $f$ is \emph{upper semicontinuous} if $f^{-1}\big([-\infty;\gamma)\big)$ is an open set for any $\gamma\in\overline{\mathbb{R}}$;
		\item $f$ is \emph{$F_\sigma$-lower semicontinuous} if  $f^{-1}\big((\gamma;+\infty]\big)$ is an $F_\sigma$-set for any ${\gamma\in\overline{\mathbb{R}}}$;
		\item $f$ is \emph{$F_\sigma$-upper semicontinuous} if $f^{-1}\big([-\infty;\gamma)\big)$ is an $F_\sigma$-set for any ${\gamma\in\overline{\mathbb{R}}}$;
		\item $(f,g)$ is \emph{a pair of Hahn}  if $f\le g$, $f$ is upper semicontinuous and $g$ is lower semicontinuous;
		\item $(f,g)$ is \emph{an $F_\sigma$-pair of Hahn} if $f\le g$, $f$ is  $F_\sigma$-lower semicontinuous and $g$ is $F_\sigma$-upper semicontinuous;
	\end{itemize}
\end{definition}
\begin{proposition}\label{prop:Lebesgue_clas_of_semicontinuous}
	Let $X$ be a topological space, $f:X\to\overline{\mathbb{R}}$ be an \mbox{($F_\sigma$-)upper} semicontinuous, $g:X\to\overline{\mathbb{R}}$ be an ($F_\sigma$-)lower semicontinuous and $\gamma\in\overline{\mathbb{R}}$. Then $f^{-1}\big([-\infty,\gamma]\big)$, $g^{-1}\big([\gamma, +\infty]\big)$ are 
	$G_\delta$-sets (resp. $F_{\sigma\delta}$-sets) and 
	$f^{-1}\big((\gamma,+\infty]\big)$, $g^{-1}\big([-\infty,\gamma)\big)$ are  $F_\sigma$-sets (resp. $G_{\delta\sigma}$-sets).
\end{proposition}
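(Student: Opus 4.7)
The plan is to reduce each of the four set classes to the defining preimages that come with \emph{upper} or \emph{lower} (or $F_\sigma$-upper / $F_\sigma$-lower) semicontinuity, using only the Boolean operations that preserve the relevant descriptive class.

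First I would pick, for any $\gamma\in\overline{\mathbb R}$, a sequence $\gamma_n\searrow\gamma$ strictly (taking $\gamma_n=-n$ if $\gamma=-\infty$ and $\gamma_n=\gamma+1/n$ otherwise) and a sequence $\gamma_n'\nearrow\gamma$ strictly (taking $\gamma_n'=n$ if $\gamma=+\infty$ and $\gamma_n'=\gamma-1/n$ otherwise). Then in $\overline{\mathbb R}$ one has the elementary identities
\[
[-\infty,\gamma]=\bigcap_{n=1}^\infty [-\infty,\gamma_n),
\qquad
[\gamma,+\infty]=\bigcap_{n=1}^\infty (\gamma_n',+\infty],
\]
\[
(\gamma,+\infty]=\bigcup_{n=1}^\infty [\gamma_n,+\infty],
\qquad
[-\infty,\gamma)=\bigcup_{n=1}^\infty [-\infty,\gamma_n'].
\]
Pulling these back by $f$ and $g$ commutes with the unions and intersections.

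Next, in the classical case, \emph{upper} semicontinuity of $f$ says that every $f^{-1}([-\infty,\gamma_n))$ is open, so the first identity gives $f^{-1}([-\infty,\gamma])$ as a countable intersection of open sets, i.e.\ a $G_\delta$-set. Taking complements (equivalently, applying the third identity together with the fact that $f^{-1}([\gamma_n,+\infty])$ is then closed) shows $f^{-1}((\gamma,+\infty])$ is $F_\sigma$. The symmetric argument with \emph{lower} semicontinuity of $g$ handles $g^{-1}([\gamma,+\infty])$ and $g^{-1}([-\infty,\gamma))$.

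Finally, in the $F_\sigma$-semicontinuous case, each preimage $f^{-1}([-\infty,\gamma_n))$ is, by definition, $F_\sigma$, so a countable intersection yields $F_{\sigma\delta}$, and its complement is $G_{\delta\sigma}$; combining this with the union identity for $(\gamma,+\infty]$ (each $f^{-1}([\gamma_n,+\infty])$ is $G_\delta$ as a complement of an $F_\sigma$-set) makes $f^{-1}((\gamma,+\infty])$ a countable union of $G_\delta$-sets, hence $G_{\delta\sigma}$. The same bookkeeping applied to $g$ completes the statement. The only subtle point, and essentially the only step worth checking carefully, is the boundary behaviour at $\gamma=\pm\infty$, which is why I chose the two sequences $\gamma_n$ and $\gamma_n'$ explicitly above so that the four interval identities hold uniformly in $\gamma$; once they are in place, the proof is a routine transfer across a countable Boolean operation.
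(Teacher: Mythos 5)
Your proof takes essentially the same route as the paper's: both express each of the four sets as a countable union or intersection of preimages of semi-infinite intervals of the kind that come directly from (\(F_\sigma\)-)semicontinuity, and then invoke the closure of the classes under countable Boolean operations. The paper merely writes one of the four cases (\(f^{-1}((\gamma,+\infty])\), \(\gamma<+\infty\)) and calls the remainder ``analogical,'' while you display all four interval identities. One small caveat: your explicit formula \(\gamma_n=\gamma+1/n\) does not produce a strictly decreasing sequence with limit \(\gamma\) when \(\gamma=+\infty\) (and likewise \(\gamma_n'=\gamma-1/n\) when \(\gamma=-\infty\)), so the corresponding identities do not ``hold uniformly in \(\gamma\)'' as you claim; but in those degenerate cases the sets in question are \(X\) or \(\emptyset\), so the conclusion is trivial and this is an imprecision of wording rather than a real gap.
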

\begin{proof}
	Let $\gamma<+\infty$ and $\gamma_n\downarrow \gamma$. Since $f^{-1}\big([\gamma_n;+\infty]\big)$ are closed (resp.{ $G_\delta$-set}), we conclude that $f^{-1}\big((\gamma;+\infty]\big)=\bigcup\limits_{n=1}^\infty f^{-1}\big([\gamma_n;+\infty]\big)$ is $F_\sigma$-sets (resp. $G_{\delta\sigma}$-sets). The proof of the rest assertions is analogical.
\end{proof}

\begin{proposition}\label{prop:inf_and_sup_of_semicontinuous}
	Let $X$ be a topological space, $f_n:X\to\overline{\mathbb R}$ be an upper (lower) semicontinuous function for any $n\in\mathbb{N}$ and $f:X\to\overline{\mathbb{R}}$ be a function such that $f(x)=\sup\limits_{n\in\mathbb{N}}f_n(x)$ (resp. $f(x)=\inf\limits_{n\in\mathbb{N}}f_n(x)$) for any $x\in X$. Then $f$ is an $F_\sigma$-lower (resp. $F_\sigma$-upper) semicontinuous function.
\end{proposition}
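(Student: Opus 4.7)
The plan is to reduce both claims to the set-theoretic identity that converts a sup/inf into a union, and then invoke Proposition \ref{prop:Lebesgue_clas_of_semicontinuous}. I will treat the ``upper semicontinuous, sup'' case first; the other case is strictly dual.

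Fix $\gamma\in\overline{\mathbb{R}}$. By the definition of $F_\sigma$-lower semicontinuity, I need to show that the level set $f^{-1}\bigl((\gamma,+\infty]\bigr)$ is $F_\sigma$. The key observation is the pointwise equivalence
\[
f(x)>\gamma \iff \sup_{n\in\mathbb{N}}f_n(x)>\gamma \iff \exists n\in\mathbb{N}\colon f_n(x)>\gamma,
\]
which rewrites the level set as the countable union
\[
f^{-1}\bigl((\gamma,+\infty]\bigr)=\bigcup_{n=1}^{\infty} f_n^{-1}\bigl((\gamma,+\infty]\bigr).
\]
Since each $f_n$ is upper semicontinuous, Proposition \ref{prop:Lebesgue_clas_of_semicontinuous} (applied in its non-parenthetical, ``classical'' form) guarantees that every set $f_n^{-1}\bigl((\gamma,+\infty]\bigr)$ is $F_\sigma$. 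A countable union of $F_\sigma$-sets is $F_\sigma$, so $f^{-1}\bigl((\gamma,+\infty]\bigr)$ is $F_\sigma$, as required. The extreme values $\gamma=\pm\infty$ pose no problem, since the corresponding level sets are $X$ and $\emptyset$ respectively, both trivially $F_\sigma$.

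For the parenthetical assertion, let $f_n$ be lower semicontinuous and $f=\inf_n f_n$. The analogous equivalence
\[
f(x)<\gamma \iff \exists n\in\mathbb{N}\colon f_n(x)<\gamma
\]
gives $f^{-1}\bigl([-\infty,\gamma)\bigr)=\bigcup_{n=1}^\infty f_n^{-1}\bigl([-\infty,\gamma)\bigr)$, and each summand is $F_\sigma$ by Proposition \ref{prop:Lebesgue_clas_of_semicontinuous}. Hence $f$ is $F_\sigma$-upper semicontinuous.

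There is no serious obstacle here; the whole proof is a one-line manipulation of quantifiers together with the fact already established in Proposition \ref{prop:Lebesgue_clas_of_semicontinuous} that the strict superlevel set of an upper semicontinuous function (respectively strict sublevel set of a lower semicontinuous function) is $F_\sigma$. The only subtlety worth noting is that semicontinuity is not preserved in the classical sense under countable sup/inf in the ``wrong'' direction, which is precisely why the conclusion is weakened from semicontinuity to $F_\sigma$-semicontinuity.
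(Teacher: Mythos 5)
Your proof is correct and follows essentially the same route as the paper: both rewrite the level set $f^{-1}\bigl((\gamma,+\infty]\bigr)$ as the countable union $\bigcup_n f_n^{-1}\bigl((\gamma,+\infty]\bigr)$, invoke Proposition~\ref{prop:Lebesgue_clas_of_semicontinuous} to see that each piece is $F_\sigma$, and conclude; the dual case is handled identically. (Incidentally, the paper's own proof contains a typo, citing Proposition~\ref{prop:inf_and_sup_of_semicontinuous} where it clearly means Proposition~\ref{prop:Lebesgue_clas_of_semicontinuous}, so your citation is the correct one.)
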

\begin{proof}
	Let  $f(x)=\sup\limits_{n\in\mathbb{N}}f_n(x)$ and $f_n$'s are upper semicontinuous. Consider $\gamma<+\infty$. Then $f_n^{-1}\big((\gamma+\infty]\big)$ is an $F_\sigma$-set by Proposition~\ref{prop:Lebesgue_clas_of_semicontinuous}. Consequently,
	$f^{-1}\big((\gamma,+\infty]\big)=\bigcup\limits_{n=1}^\infty f_n^{-1}\big((\gamma+\infty]\big)$ is an $F_\sigma$-set as well. Thus, $f$ is $F_\sigma$-lower semicontinuous. The proof of the second case is analogical.
\end{proof}

\begin{proposition}\label{prop:semicontinuity_of_Lipschitz}
	Let $X$ and $Y$ be a metric space, $f:X\to Y$ be a continuous function and $r>0$. Then
	\begin{itemize}
		\item[$(i)$] $\Lip_r f:X\to[0;+\infty]$ is a lower semicontinuous function;
		\item[$(ii)$] $\lip_r f:X\to[0;+\infty]$ is an upper semicontinuous function;
		\item[$(iii)$] $(\lip_r f,\Lip_r f)$ is a pair of Hahn;
		\item[$(iv)$] $\Lip f:X\to[0;+\infty]$ is an $F_\sigma$-upper semicontinuous function;
		\item[$(v)$] $\lip f:X\to[0;+\infty]$ is a $F_\sigma$-lower  semicontinuous function;
		\item[$(vi)$] $(\lip f,\Lip f)$ is an $F_\sigma$-pair of Hahn;
		\item[$(vii)$] $\LIP f:X\to[0;+\infty]$ is an upper semicontinuous function.
	\end{itemize}
\end{proposition}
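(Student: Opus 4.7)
My plan is to prove (i) and (ii) directly from the pointwise formulas (equation (2) and the definition), then derive (iii)--(vi) from these together with Proposition~3.4 and the chains of inequalities (6) and (7), and finally handle (vii) by a short direct argument.

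For (i), I would show that $\{x:\Lip_r f(x)>\gamma\}$ is open. Fix $x_0$ in this set. By equation (2) there is $u_0\neq x_0$ with $|u_0-x_0|_X<r$ and $|f(u_0)-f(x_0)|_Y/|u_0-x_0|_X>\gamma$. Since $f$ and the metric are continuous, the ratio $|f(u_0)-f(x)|_Y/|u_0-x|_X$ depends continuously on $x$ near $x_0$ (and the same $u_0$ lies in $B(x,r)$ for $x$ close enough). So the witness $u_0$ persists on a whole neighbourhood of $x_0$, giving $\Lip_r f(x)>\gamma$ there.

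For (ii), I would show that $\{x:\lip_r f(x)<\gamma\}$ is open. Fix $x_0$ with $\lip_r f(x_0)<\gamma$, so there is $\varrho_0\in(0,r)$ with $M:=\varrho_0\Lip^{\varrho_0}f(x_0)=\sup_{u\in B(x_0,\varrho_0)}|f(u)-f(x_0)|_Y<\gamma\varrho_0$. Pick $\eta>0$ so small that $M<\gamma(\varrho_0-\eta)$, let $\varrho'=\varrho_0-\eta$, and choose $\delta\in(0,\eta)$ with $|f(x)-f(x_0)|_Y<\gamma(\varrho_0-\eta)-M$ for $|x-x_0|_X<\delta$ (continuity of $f$). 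Then for such $x$ and any $u\in B(x,\varrho')$ the triangle inequality forces $u\in B(x_0,\varrho_0)$, whence
\[
|f(u)-f(x)|_Y\le |f(u)-f(x_0)|_Y+|f(x)-f(x_0)|_Y<\gamma\varrho',
\]
so $\Lip^{\varrho'}f(x)<\gamma$ and hence $\lip_r f(x)<\gamma$. This delicate two-parameter adjustment (shrinking $\varrho_0$ to $\varrho'$ while shrinking the $x$-neighbourhood to $\delta$) is the main technical obstacle in the whole proof.

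Part (iii) follows by combining (i), (ii), and $\lip_r f\le\Lip_r f$ from (6). For (iv) and (v), the monotonicity (5) yields $\Lip f=\inf_{n}\Lip_{1/n}f$ and $\lip f=\sup_{n}\lip_{1/n}f$ (using (4)); applying Proposition~\ref{prop:inf_and_sup_of_semicontinuous} to the families from (i) and (ii) shows that $\Lip f$ is $F_\sigma$-upper semicontinuous and $\lip f$ is $F_\sigma$-lower semicontinuous. Part (vi) is immediate from (iv), (v), and inequality (7). For (vii), suppose $\LIP f(x_0)<\gamma$; by the definition there is $r_0>0$ with $\LIP^{r_0}f(x_0)<\gamma$, and for $|x-x_0|_X<\delta$ with $0<\delta<r_0$ the inclusion $B(x,r_0-\delta)\subseteq B(x_0,r_0)$ gives $\LIP^{r_0-\delta}f(x)\le\LIP^{r_0}f(x_0)<\gamma$, so $\LIP f(x)<\gamma$ on a neighbourhood of $x_0$.
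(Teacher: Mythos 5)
Your proof is correct, and parts (ii)--(vii) follow the paper's strategy almost exactly: (ii) is the same two-parameter continuity argument with the auxiliary threshold $\gamma_1$ absorbed into your $M$ and $\eta$; (iii) combines (i), (ii) and $\lip_r f\le\Lip_r f$; (iv)--(vi) pass to the monotone limits $\Lip_{1/n}f\downarrow\Lip f$, $\lip_{1/n}f\uparrow\lip f$ and invoke Proposition~\ref{prop:inf_and_sup_of_semicontinuous}; and (vii) is the same radius-halving argument.

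The genuine difference is in part (i). The paper proves lower semicontinuity of $\Lip_r f$ by working directly with $\Lip_r f(x_0)=\sup_{\varrho<r}\Lip^\varrho f(x_0)$, which forces a delicate bookkeeping step: one must strengthen $\gamma$ to $\gamma_1$, enlarge $\varrho$ to $\varrho_1\in(\varrho,r)$ with $\gamma\varrho_1<\gamma_1\varrho$, and then choose $\delta$ small enough that a witness $u\in B(x_0,\varrho)$ lands in $B(x,\varrho_1)$ with the difference quotient still exceeding $\gamma\varrho_1$. You instead invoke the equivalent single-supremum formula (\ref{eqn:Lip_r_as_sup}), $\Lip_r f(x)=\sup_{0<|u-x|_X<r}\frac{1}{|u-x|_X}|f(u)-f(x)|_Y$, which the paper establishes but does not use here. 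Under that formulation lower semicontinuity is immediate: a fixed witness $u_0$ with $0<|u_0-x_0|_X<r$ and difference quotient exceeding $\gamma$ persists (both the membership $0<|u_0-x|_X<r$ and the value of the quotient vary continuously in $x$), so no second radius $\varrho_1$ is needed. This is cleaner, since it replaces the joint manipulation of $(\gamma_1,\varrho_1,\delta)$ by a single use of the continuity of $x\mapsto\frac{|f(u_0)-f(x)|_Y}{|u_0-x|_X}$. One small point worth stating explicitly if you write this up: you must also shrink the neighbourhood so that $|u_0-x|_X$ stays bounded away from $0$, e.g.\ take $|x-x_0|_X<\tfrac12|u_0-x_0|_X$, to guarantee both $u_0\ne x$ and the continuity of the quotient.
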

\begin{proof}
	$(i)$. Fix $r>0$. Let $x_0\in X$ and $\gamma<\Lip_r f(x_0)$.  Then
	$$
	\sup\limits_{\varrho<r}\Lip^\varrho f(x_0)=\Lip_r f(x_0)>\gamma.
	$$
	So, there is $\varrho\in(0;r)$ such that $\Lip^\varrho f(x_0)>\gamma$. 
    Pick $\gamma_1$ such that ${\gamma<\gamma_1<\Lip^\varrho f(x_0)}$. 
	Therefore,
	$$
	\sup\limits_{u\in B(x_0,\varrho)}\big|f(u)-f(x_0)\big|_Y=\varrho\Lip^\varrho f(x_0)>\gamma_1 \varrho.
	$$
	Thus, there is $u\in B(x_0,\varrho)$ with
	$$
	\big|f(u)-f(x_0)\big|_Y>\gamma_1 \varrho.
	$$
    Then we choose $\varrho_1$ such that $\varrho<\varrho_1<r$ and 
    $\gamma \varrho_1<\gamma_1\varrho$.
	By the continuity of $f$ at $x_0$ there exists $\delta>0$ such 
    that $\varrho+\delta<\varrho_1$ and 
    $$\big|f(x)-f(x_0)\big|_Y<\gamma_1\varrho-\gamma \varrho_1\ \ \ \text{ for any }\ \ \ x\in U=B(x_0,\delta).$$ 
	Consider $x\in U$. Then 
	$$|u-x|_X\le |u-x_0|_X+|x_0-x|_X<\varrho+\delta<\varrho_1,$$
	and, so, $u\in B(x,\varrho_1)$. Consequently,
	$$
	\big|f(u)-f(x)\big|_Y\ge \big|f(u)-f(x_0)\big|_Y-\big|f(x)-f(x_0)\big|_Y>\gamma_1\varrho-(\gamma_1\varrho-\gamma \varrho_1)=\gamma \varrho_1.
	$$
	Hence, $\Lip^{\varrho_1}f(x)>\gamma$. But $0<\varrho_1<r$. Therefore, $\Lip_r f(x)>\gamma$ for any $x\in U$. Thus, $\Lip_r f$ is lower semicontinuous at $x_0$.

	$(ii)$. Fix $r>0$. Let $x_0\in X$ and $\gamma>\lip_r f(x_0)$.  Then
	$$
	\inf_{\varrho<r}\Lip^\varrho f(x_0)=\lip_r f(x_0)<\gamma.
	$$
	So, there is $\varrho<r$ such that $\Lip^\varrho f(x_0)<\gamma$. Pick  $\gamma_1$ such that $\Lip^\varrho f(x_0)<\gamma_1<\gamma$. Then we chose $\varrho_1$ such that $0<\varrho_1<\varrho$ and $\gamma \varrho_1>\gamma_1\varrho$.
	Therefore,
	$$
	\sup\limits_{u\in B(x_0,\varrho)}\big|f(u)-f(x_0)\big|_Y=\varrho\Lip^\varrho f(x_0)<\gamma_1 \varrho.
	$$
	Then 
	$$
	\big|f(u)-f(x_0)\big|_Y<\gamma_1 \varrho\ \ \ \text{ for any }\ \ \  u\in  B(x_0,\varrho).
	$$
	By the continuity of $f$ at $x_0$ there exists $\delta>0$ such that $\varrho_1+\delta<\varrho$ and $$\big|f(x)-f(x_0)\big|_X<\gamma \varrho_1-\gamma_1 \varrho\ \ \ \text{ for any }\ \ \ x\in U=B(x_0,\delta).$$
	Consider $x\in U$ and $u\in B(x,\varrho_1)$. 
	Then
	$$|u-x_0|_X\le |u-x|_X+|x-x_0|_X<\varrho_1+\delta<\varrho,$$
	and so, $u\in B(x_0,\varrho)$. Therefore,
	$$
	\big|f(u)-f(x)\big|_Y\le \big|f(u)-f(x_0)\big|_Y+\big|f(x_0)-f(x)\big|_Y<\gamma_1\varrho+(\gamma \varrho_1-\gamma_1 \varrho)=\gamma \varrho_1.
	$$
	Thus, $\frac1{\varrho_1}\big|f(u)-f(x)\big|_Y\le \gamma$ for any $u\in B(x,\varrho_1)$.
	Hence, $\Lip^{\varrho_1}f(x)\le\gamma$. But $0<\varrho_1<r$. Therefore, $\lip_r f(x)\le\gamma$ for any $x\in U$. Thus, $\lip_r f$ is upper semicontinuous at $x_0$.
	
	$(iii)$.  It is implied from $(i)$ and $(ii)$.
	
	$(iv)$, $(v)$, $(vi)$. By (\ref{eqn:Lip_monotone}), (\ref{eqn:Lip_as_limsup}) and (\ref{eqn:lip_as_sup_lip_r})  we conclude that $\Lip_{\frac1n}f(x)\downarrow\Lip f(x)$ and $\lip_{\frac1n}f(x)\uparrow\lip f(x)$ for any $x\in X$. Thus, the needed assertions is implied from $(i)$, $(ii)$ and Proposition~\ref{prop:inf_and_sup_of_semicontinuous}.
	
	$(vii)$. Fix $x_0\in X$ and $\gamma>\LIP f(x_0)$. 
    Since $\LIP f(x_0)=\inf\limits_{r>0}\LIP^rf(x_0)$, 
    there exists $r>0$ such that $\LIP^r f(x_0)<\gamma$. 
    Set $\varrho=\frac{r}{2}$ and consider $x\in B(x_0,\varrho)$. 
    Then $B(x,\varrho)\subseteq B(x_{0},r)$. Consequently, 
	\begin{align*}
		\LIP f(x)&\le\LIP^\varrho f(x)=\sup\limits_{u\ne v\in B(x,\varrho)}\tfrac{1}{|u-v|_X}\big|f(u)-f(v)\big|_Y\\
		&\le
		\sup\limits_{u\ne v\in B(x_0,r)}\tfrac{1}{|u-v|_X}\big|f(u)-f(v)\big|_Y
		=\LIP^r f(x_0)<\gamma
	\end{align*}
	and, hence, $\LIP f$ is upper semicontinuous.
\end{proof}

Proposition~\ref{prop:Lebesgue_clas_of_semicontinuous} and \ref{prop:semicontinuity_of_Lipschitz} yield the following assertions.

\begin{proposition}\label{prp:L(f)isFsl(f)idFsd}
	Let $X$ and $Y$ be a metric space, $f:X\to Y$ be a continuous function, $x\in X$, $\varepsilon>0$ and $\gamma\ge 0$. Then
	\begin{itemize}
		\item[$(i)$] $L(f)$ is an $F_\sigma$-set;
		\item[$(ii)$] $L^\infty(f)$ is a $G_\delta$-set;
		\item[$(iii)$] $\ell(f)$ is a $G_{\delta\sigma}$-set;
		\item[$(iv)$] $\ell^\infty(f)$ is an $F_{\sigma\delta}$-set;
		\item[$(v)$] $\mathbb{L}(f)$ is an open set;
		\item[$(vi)$] $\mathbb{L}^\infty(f)$ is a closed set;
	\end{itemize}
\end{proposition}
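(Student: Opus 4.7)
The plan is to derive each of the six items as a direct consequence of Proposition~\ref{prop:semicontinuity_of_Lipschitz}, which records the semicontinuity properties of the derivatives $\Lip f$, $\lip f$, and $\LIP f$, combined with Proposition~\ref{prop:Lebesgue_clas_of_semicontinuous}, which classifies the Borel complexity of the sublevel and superlevel sets of semicontinuous functions. There is no real geometric obstacle here; the entire argument is a bookkeeping exercise in which the set of points where a given derivative is finite is written as
\[
\{x\in X:\varphi(x)<\infty\}=\bigcup_{n=1}^\infty \varphi^{-1}\bigl([-\infty,n)\bigr)
\]
for $\varphi\in\{\Lip f,\lip f,\LIP f\}$, and then the Borel class of each term is read off from the appropriate semicontinuity type of $\varphi$.

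For (i), apply Proposition~\ref{prop:semicontinuity_of_Lipschitz}(iv): the function $\Lip f$ is $F_\sigma$-upper semicontinuous, so by the definition $\big(\Lip f\big)^{-1}\bigl([-\infty,n)\bigr)$ is an $F_\sigma$-set for each $n\in\mathbb{N}$, whence $L(f)=\bigcup_n \big(\Lip f\big)^{-1}\bigl([-\infty,n)\bigr)$ is $F_\sigma$ (a countable union of $F_\sigma$-sets is $F_\sigma$). Item (ii) is obtained by complementation, since $L^\infty(f)=X\setminus L(f)$ and the complement of an $F_\sigma$-set is a $G_\delta$-set. For (v) and (vi), apply Proposition~\ref{prop:semicontinuity_of_Lipschitz}(vii): the function $\LIP f$ is upper semicontinuous, so $\big(\LIP f\big)^{-1}\bigl([-\infty,n)\bigr)$ is open, and therefore $\mathbb{L}(f)$, being a countable union of open sets, is itself open, and $\mathbb{L}^\infty(f)=X\setminus \mathbb{L}(f)$ is closed.

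For (iii), use Proposition~\ref{prop:semicontinuity_of_Lipschitz}(v): the function $\lip f$ is $F_\sigma$-lower semicontinuous, hence by the ``resp.''-clause of Proposition~\ref{prop:Lebesgue_clas_of_semicontinuous} each sublevel set $\big(\lip f\big)^{-1}\bigl([-\infty,n)\bigr)$ is $G_{\delta\sigma}$. Since the class of $G_{\delta\sigma}$-sets is stable under countable unions, $\ell(f)=\bigcup_n \big(\lip f\big)^{-1}\bigl([-\infty,n)\bigr)$ is $G_{\delta\sigma}$. Finally, (iv) follows by passing to complements, as the complement of a $G_{\delta\sigma}$-set is $F_{\sigma\delta}$.

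The only subtlety to watch is the direction of semicontinuity used in each case: for $L(f)$ and $\mathbb{L}(f)$ one needs \emph{upper} ($F_\sigma$-upper) semicontinuity of $\Lip f$ and $\LIP f$ to control the sublevel sets directly, while for $\ell(f)$ one has only \emph{lower} $F_\sigma$-semicontinuity of $\lip f$, which raises the Borel class of the sublevel sets by one step (from $F_\sigma$ to $G_{\delta\sigma}$) and explains the jump in complexity between items (i) and (iii). Apart from this, the argument reduces to the standard stability of the classes $F_\sigma$, $G_\delta$, $F_{\sigma\delta}$, $G_{\delta\sigma}$ under countable unions/intersections and complementation.
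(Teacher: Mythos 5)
Your proof is correct and follows exactly the route the paper indicates: the paper itself offers no argument beyond the remark that Propositions~\ref{prop:Lebesgue_clas_of_semicontinuous} and~\ref{prop:semicontinuity_of_Lipschitz} ``yield the following assertions,'' and you have simply supplied the routine details—writing each set of finiteness as $\bigcup_n\varphi^{-1}\bigl([-\infty,n)\bigr)$, reading off the Borel class of each sublevel set from the relevant semicontinuity type, and taking complements for the $\infty$-sets. The one correct subtlety you flagged (that $\lip f$ is only $F_\sigma$-\emph{lower} semicontinuous, so its sublevel sets come out $G_{\delta\sigma}$ via the ``resp.''-clause of Proposition~\ref{prop:Lebesgue_clas_of_semicontinuous} rather than directly $F_\sigma$) is exactly the point that distinguishes items (iii)--(iv) from (i)--(ii).
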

\begin{remark} Observe that Proposition~\ref{prop:semicontinuity_of_Lipschitz}$(vii)$ and Proposition~\ref{prp:L(f)isFsl(f)idFsd}$(v)$ and $(vi)$ hold for non-continuous functions as well.
\end{remark}

\section{Takagi-van der Waerden  functions for general metric spaces}

\begin{definition}
	Let $X$ be a metric space and $\varepsilon>0$. A set $S\subseteq X$ is called 
	\begin{itemize}
		\item \textit{$\varepsilon$-separated in $X$} if for any distinct points $x,y\in S$ we have $|x-y|_X\ge\varepsilon$;
		\item \textit{maximal $\varepsilon$-separated} if it is $\varepsilon$-separated and for any $\varepsilon$-separated set $T$ in $X$ such that $S\subseteq T$ we have $S=T$;
		\item \textit{$\varepsilon$-dense} if for any $x\in X$ there is $s\in S$ with $|x-s|_X<\varepsilon$.
	\end{itemize}
\end{definition}
We always denote $d(x,A)=\inf\limits_{y\in A}|x-y|_X$ for any metric space $X$, $x\in X$ and $A\subseteq X$.
\begin{proposition}\label{prop:sep_and_dense_set}
	Let $X$ be a metric space and $\varepsilon>0$. Then
	\begin{itemize}
		\item[$(i)$] for any $\varepsilon$-separated set $S_0$ there is a maximal $\varepsilon$-separated set $S$ with $S_0\subseteq S$;
		\item[$(ii)$] for any $a>1$ there is an increasing sequence $(S_n)_{n=0}^\infty$ of maximal \mbox{$\frac{1}{a^n}$-separate}d sets $S_n$ in $X$. 
		\item[$(iii)$] an $\varepsilon$-separated set $S$ is maximal $\varepsilon$-separated if and only if $S$ is $\varepsilon$-dense;
		\item[$(iv)$] for any $\varepsilon$-dense set $S$ (in particular, for any maximal $\varepsilon$-separated set) we have that $d(x,S)<\varepsilon$ for any $x\in X$ and the function $d(\cdot,S)$ is 1-Lipschitz.
	\end{itemize}
\end{proposition}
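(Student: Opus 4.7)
The proposition bundles four standard facts about maximal separated sets, so my plan is to handle each part quickly using elementary tools: Zorn's lemma, transfinite/inductive extension, and the reverse triangle inequality. None of the parts looks like it contains a genuine obstacle; the main care is to state the Zorn argument cleanly and to use (i) correctly when bootstrapping to (ii).

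For (i), I would order the family $\mathcal{F}=\{T\subseteq X : S_0\subseteq T\text{ and }T\text{ is }\varepsilon\text{-separated}\}$ by inclusion. Given a chain $\mathcal{C}\subseteq\mathcal{F}$, I would check that $U=\bigcup\mathcal{C}$ is still $\varepsilon$-separated: any two distinct $x,y\in U$ lie in a common $T\in\mathcal{C}$ (by totality of the chain), hence $|x-y|_X\ge\varepsilon$. So $U\in\mathcal{F}$ is an upper bound, and Zorn yields a maximal element $S$.

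For (ii), I would construct the sequence inductively. Start with $S_0$ a maximal $1$-separated set (which exists by (i) applied to $S_0=\emptyset$). Assuming $S_n$ is already a maximal $\frac{1}{a^n}$-separated set, observe that since $a>1$ we have $\frac{1}{a^{n+1}}<\frac{1}{a^n}$, so $S_n$ is automatically $\frac{1}{a^{n+1}}$-separated; applying (i) to $S_n$ at scale $\frac{1}{a^{n+1}}$ gives a maximal $\frac{1}{a^{n+1}}$-separated set $S_{n+1}\supseteq S_n$. For (iii), I would argue by contrapositive in each direction: if $S$ is $\varepsilon$-separated but not $\varepsilon$-dense, there is $x\in X$ with $d(x,S)\ge\varepsilon$, and then $S\cup\{x\}$ is $\varepsilon$-separated, strictly extending $S$, which contradicts maximality; conversely, if $S$ is $\varepsilon$-dense then for every $x\notin S$ there is $s\in S$ with $|x-s|_X<\varepsilon$, so $S\cup\{x\}$ is not $\varepsilon$-separated, hence no proper $\varepsilon$-separated extension exists.

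For (iv), the bound $d(x,S)<\varepsilon$ is immediate from the definition of $\varepsilon$-density (choose $s\in S$ with $|x-s|_X<\varepsilon$ and use $d(x,S)\le |x-s|_X$); for maximal $\varepsilon$-separated sets this applies via (iii). The $1$-Lipschitz property of $d(\,\cdot\,,S)$ is the classical computation: for any $x,y\in X$ and $s\in S$,
\[
d(x,S)\le |x-s|_X\le |x-y|_X+|y-s|_X,
\]
and taking the infimum over $s\in S$ on the right gives $d(x,S)\le |x-y|_X+d(y,S)$; by symmetry $|d(x,S)-d(y,S)|\le |x-y|_X$. I do not anticipate any obstacle; the only subtlety worth flagging is to make sure that in (ii) the base case $n=0$ uses a maximal $1$-separated set rather than leaving it implicit.
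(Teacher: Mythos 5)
Your proof is correct and takes essentially the same route as the paper, which merely sketches the argument (Teichm\"uller--Tukey lemma for $(i)$, induction for $(ii)$, ``standard considerations of maximality'' for $(iii)$, and the definitions for $(iv)$). The only cosmetic difference is that you invoke Zorn's lemma via a chain argument where the paper cites the Teichm\"uller--Tukey lemma; since $\varepsilon$-separatedness is a property of finite character the two are interchangeable here, and your inductive bootstrap for $(ii)$ and the contrapositive argument for $(iii)$ match what the paper leaves to the reader.
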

\begin{proof} Part $(i)$ can be deduced  immediately from  the Teichm\"uller-Tukey lemma \cite[p.~9]{En}. And then we obtain $(ii)$ by induction. Part $(iii)$ can be obtained by standard considerations of maximality. Part $(iv)$ implies from the definitions and $(ii)$. 
\end{proof}

\begin{definition}\label{def:TWfunction} 
	Let $X$ be a metric space and $a>b>0$. A function $f:X\to\mathbb{R}$ is called \textit{a Takagi-van der Waerden  function of type $(a,b)$} (or \textit{TW-function of type $(a,b)$} in short) if there exist a sequence of maximal $\frac1{a^n}$-separated sets $S_n$ such that
	$$
	f(x)=\sum\limits_{n=0}^\infty b^n\varphi_n(x), \mbox{ where }\varphi_n(x)=d(x, S_n),\ \ x\in X.
	$$
	Moreover, if ${S_{n}\subseteq S_{n+1}}$ for any $n$, then $f$ is called \textit{TW-function of monotonic type $(a,b)$}.
	The \textit{$n$-th partial sum and the $n$-th remainder of $f$}  is defined as 
	$$
	s_n(x)=\sum\limits_{k=0}^{n-1} b^k\varphi_k(x),\ \ 
	r_n(x)=\sum\limits_{k=n}^\infty b^k\varphi_k(x),\ \ x\in X.
	$$
\end{definition}

In the case $S_n=\frac1{a^n}\mathbb{Z}$ and $X=\mathbb{R}$ we obtain \textit{the standard Takagi-van der Waerden  function of type $(a,b)$} $f_{a,b}:\mathbb{R}\to\mathbb{R}$,
$$
f_{a,b}(x)=\sum\limits_{n=1}^\infty b^nd\big(x, \tfrac1{a^n}\mathbb{Z}\big)
=\sum\limits_{n=1}^\infty \big(\tfrac ba\big)^n d(a^nx,\mathbb{Z}),\ \ x\in \mathbb{R}.
$$
In the case $a\in\mathbb{N}$ this function is TW-function of monotonic type $(a,b)$.
In particular, if $a=r>1$ and $b=1$ we obtain  \textit{the standard Takagi-van der Waerden  function of type $r$} $f_r:\mathbb{R}\to\mathbb{R}$,
$$
f_r(x)=f_{r,1}(x)=
\sum\limits_{n=1}^\infty \tfrac 1{r^n} d(r^nx,\mathbb{Z}),\ \ x\in \mathbb{R},
$$
which was considered, for example, in \cite{Allaart}. If we put $r=2$ or $r=10$ then we obtain the classical nowhere differentiable  \textit{Takagi function} $f_2$ or \textit{van der Waerden function} $f_{10}$ (see Introduction in \cite{Allaart} ).

\begin{proposition}\label{prop:TWfunction_is_Lipschitz}
	Let $X$ be a metric space and $a>b>0$ and $f$ is TW-function of type $(a,b)$, $n\in\mathbb{N}$, $s_n$ and $r_n$ be the $n$-th partial sum and the $n$-th reminder of $f$. Then the following conditions hold:
	\begin{itemize}
		\item[$(i)$] $f:X\to\mathbb{R}$ is a continuous function such that $0\le f(x)\le\frac a{a-b}$, $x\in X$;
		\item[$(ii)$] $r_n:X\to\mathbb{R}$ is a continuous function such that $0\le r_n(x)\le\frac {b^{n}}{(a-b)a^{n-1}}$, $x\in X$;	
		\item[$(iii)$] if $b>1$ then $s_n:X\to\mathbb{R}$ is a Lipschitz function with the constant $\frac{b^n}{b-1}$.
	\end{itemize} 
\end{proposition}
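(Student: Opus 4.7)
The plan is to exploit Proposition~\ref{prop:sep_and_dense_set}(iv), which supplies the two facts we need about each $\varphi_n = d(\cdot, S_n)$: it is 1-Lipschitz, and $0 \le \varphi_n(x) < \frac{1}{a^n}$ for all $x \in X$. Since $a > b > 0$, this gives the term-wise bound $0 \le b^n \varphi_n(x) \le (b/a)^n$, and the series defining $f$ is dominated by the convergent geometric series $\sum (b/a)^n$.

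For $(i)$, I would apply the Weierstrass M-test with majorants $M_n = (b/a)^n$; this delivers uniform convergence on $X$, so $f$ is continuous as the uniform limit of the continuous (indeed Lipschitz) partial sums $s_n$. Summing the majorants gives $f(x) \le \sum_{n=0}^\infty (b/a)^n = \frac{a}{a-b}$, and nonnegativity of each term gives $f(x) \ge 0$.

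For $(ii)$, I would view $r_n$ as the tail of the same uniformly convergent series, so continuity is inherited; the estimate
$$
r_n(x) \;\le\; \sum_{k=n}^\infty (b/a)^k \;=\; \frac{(b/a)^n}{1 - b/a} \;=\; \frac{b^n}{(a-b)\,a^{n-1}}
$$
is a direct geometric computation. Alternatively, once $(i)$ is in hand, continuity of $r_n = f - s_n$ is immediate, since $s_n$ is a finite sum of continuous functions.

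For $(iii)$, each $\varphi_k$ being $1$-Lipschitz makes $b^k \varphi_k$ satisfy $\|b^k\varphi_k\|_{\lip} \le b^k$; subadditivity of $\|\cdot\|_{\lip}$ then yields
$$
\|s_n\|_{\lip} \;\le\; \sum_{k=0}^{n-1} b^k \;=\; \frac{b^n - 1}{b-1} \;\le\; \frac{b^n}{b-1},
$$
where the hypothesis $b > 1$ is used only to make the closed-form bound meaningful (the computation itself works for $b \ne 1$). No genuine obstacle arises: the whole proposition is essentially a summability exercise once Proposition~\ref{prop:sep_and_dense_set}(iv) is in place.
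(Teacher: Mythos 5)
Your proof is correct and follows essentially the same route as the paper's: term-wise bounds $0 \le b^n\varphi_n(x) \le (b/a)^n$ from Proposition~\ref{prop:sep_and_dense_set}$(iv)$, Weierstrass M-test for uniform convergence and continuity, the geometric series computation for $(i)$ and $(ii)$, and subadditivity of $\|\cdot\|_{\lip}$ (equivalently, the triangle inequality applied to the finite sum) together with the 1-Lipschitz property of each $\varphi_k$ for $(iii)$. The only cosmetic difference is that the paper organizes $(i)$ as the special case $f = r_0$ of $(ii)$, whereas you derive it directly — this changes nothing of substance.
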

\begin{proof} 
	$(ii)$. By Proposition~\ref{prop:sep_and_dense_set}$(iv)$ we conclude that  $ b^kd(x, S_k)\le\big(\frac{b}{a}\big)^k$. Therefore, the series from  Definition~\ref{def:TWfunction} is uniformly convergent.  Thus, $r_n$ is a continuous function and
	$$0\le r_n(x)\le\sum\limits_{k=n}^\infty\big(\tfrac{b}{a}\big)^k
	=\frac{\big(\frac ba\big)^n}{1-\frac ba}=\frac {b^{n}}{(a-b)a^{n-1}}
	$$
	for any $x\in X$.

	$(i)$. Since $f=r_0$, we conclude that $(i)\Rightarrow(ii)$.
	
	$(iii)$. By Proposition~\ref{prop:sep_and_dense_set}$(iv)$ we obtain that 
	$$
	|s_n(x)-s_n(y)|\le \sum\limits_{k=0}^{n-1}b^k\big|d(x,S_k)-d(y,S_k)\big|
	\le
	$$
	$$
	\le\sum\limits_{k=0}^{n-1}b^k|x-y|_X=\frac{b^n-1}{b-1}|x-y|_X\le \frac{b^n}{b-1}|x-y|_X
	$$
	for any $x,y\in X$.
\end{proof}

\section{Big Lipschitz derivative of Takagi - van der Waerden functions}

\begin{theorem}\label{thm:BigLipTW}
	Let $X$ be a metric space, $a>b>2$ and $f$ be a TW-function of monotonic type $(a,b)$. Then $\Lip f(x)=\infty$ for any $x\in X^d$. 
	%Moreover, if $(S_n)_{n=0}^\infty$  is such as in Definition~\ref{def:TWfunction} then for any $x\in X^d\cap\bigcup\limits_{n=1}^\infty S_n$ we have that $\lip f(x)=\infty$
\end{theorem}

\begin{proof}
	Let $S_n$, $\varphi_n$, $s_n$, $r_n$ be such as in Definition~\ref{def:TWfunction}. 
	Fix $x\in X^d$. Let us prove that $\Lip f(x)=\infty$. It is enough to prove that there exists a sequence of points $u_n\to x$ such that $u_n\ne x$ and $\frac{1}{\varrho_n}|f(u_n)-f(x)|\to\infty$ as $n\to\infty$, where  $\varrho_n=|u_n-x|_X$.

	Firstly, we consider the case  $x\in \bigcup\limits_{n=1}^\infty S_n$. Then there is $n_0\in\mathbb{N}$ such that $x\in S_{n_0}$. Since $x\in X^d$, for any $n\in\mathbb N$ there is $u_n\in B\big(x,\tfrac1{2a^n}\big)\setminus\{x\}$. Then $0<\varrho_n=|u_n-x|_X<\tfrac1{2a^n}$. In particular, $\varrho_n\to 0$. Fix $n\ge n_0$. Therefore, $x\in S_{n_0}\subseteq S_n\subseteq S_m$ and so $\varphi_m(x)=0$ for any $m\ge n$. Thus, $r_n(x)=0$. Since $S_n$ is $\frac{1}{a^n}$-separated, it is easy to see that $\varphi_n(u_n)=d(u_n,S_n)=\varrho_n$, and then $r_n(u_n)\ge b^n\varphi_n(u_n)= b^n\varrho_n$. Therefore, $|r_n(u_n)-r_n(x)|=r_n(u_n)\ge b^n\varrho_n$. On the other hand,
    Proposition~\ref{prop:TWfunction_is_Lipschitz}$(iii)$
    implies that $$|s_n(u_n)-s_n(x)|<\tfrac{b^n}{b-1}|u_n-x|_X=\tfrac{b^n\varrho_n}{b-1}.$$ Thus, 
	\begin{align*}
		|f(u_n)-f(x)|&=\Big|\big(r_n(u_n)-r_n(x)\big)+\big(s_n(u_n)-s_n(x)\big)\Big|\\
		&\ge\big|r_n(u_n)-r_n(x)\big|-\big|s_n(u_n)-s_n(x)\big|\\
		&\ge b^n\varrho_n-\tfrac{b^n\varrho_n}{b-1}=\alpha b^n \varrho_n,
	\end{align*}
	where  $\alpha=\tfrac{b-2}{b-1}>0$. Consequently, $\frac{1}{\varrho_n}|f(u_n)-f(x)|\ge \alpha b^n\to\infty$.
	
	Finally, consider the case where $x\notin\bigcup\limits_{n=1}^\infty S_n$. Fix $n\in\mathbb{N}$.  Therefore, $x\notin S_n$, and so $\varphi_n(x)=d(x,S_n)>0$. Since $\frac b2>1$,  $d(x, S_n)=\varphi_n(x)<\frac b2\varphi_n(x)$. Consequently, there is $u_n\in S_n$ with $\varrho_n=|u_n-x|_X<\frac b2\varphi_n(x)$.  Hence, $\varphi_n(x)>\frac{2\varrho_n}{b}$. 
    Proposition~{\ref{prop:sep_and_dense_set}$(iv)$} implies $\varphi_n(x)<\frac{1}{a^n}$, and so $\varrho_n\to 0$. Since $u_n\in S_n\subseteq  S_m$ for any $m\ge n$, we have that $r_n(u_n)=0$. Then % by Proposition~\ref{prop:TWfunction_is_Lipschitz}$(iii)$ we conclude	
%    Proposition~\ref{prop:TWfunction_is_Lipschitz}$(iii)$ implies $\varphi_n(x)<\frac{1}{a^n}$, and so $\varrho_n\to 0$. Since $u_n\in S_n\subseteq  S_m$ for any $m\ge n$, we have that $r_n(u_n)=0$. Then % by Proposition~\ref{prop:TWfunction_is_Lipschitz}$(iii)$ we conclude	
    % by Proposition~{\color{red}\ref{prop:sep_and_dense_set}$(iv)$} we conclude
    \begin{align*}
		\big|f(x)-f(u_n)\big|&=\big|r_n(x)+s_n(x)-s_n(u_n)\big|\\
		&\ge r_n(x)-\big|s_n(x)-s_n(u_n)\big|\\
		&\ge b^n\varphi_n(x)-\tfrac{b^n}{b-1}|u_n-x|_X\\
		&\ge b^n\tfrac{2\varrho_n}{b}-\tfrac{b^n\varrho_n}{b-1}
		=\beta b^n\varrho_n,
	\end{align*}
	where $\beta=\tfrac{b-2}{b(b-1)}$. Thus, $\frac{1}{\varrho_n}\big|f(u_n)-f(x)\big|\ge \beta b^n\to\infty$.
\end{proof}

\section{Little Lipschitz derivative of a Takagi -- van der Waerden function in a hermetic space}

In this section our goal is to obtain an analogue of Theorem~\ref{thm:BigLipTW} for the little Lipschitz derivative.
To achieve this purpose we need some special properties of a metric space. We start with some known notion of shell porosity
which was introduced in \cite{Va}.

\begin{definition}
	Let $X$ be a metric space and $x\in X$. The \textit{open shell} about $x$ of radii $r$ and $R$ is given by  $$S(x,r,R)=B(x,R)\setminus\overline{B(x,r)}.$$ 
	The \textit{shell porosity} of $X$ at $x$ is, by definition, the number
	$$
	p^s(X,x)=\limsup\limits_{r\to0^+}\tfrac1r\sup\Big\{h\in[0;r]:\exists t\in[0;r-h]\  \big|\ S(x,t,t+h)=
	\varnothing\Big\}
	$$
\end{definition}

% For our needs we introduce a notion of hermeticity.
Now we will introduce another notion, that is more suitable for our needs, but closely related to the shell porosity.
\begin{definition}
	Let $X$ be a metric space and $x\in X$. 	
        Denote
        $$
	H(X,x)=\liminf_{r\to 0^+}\tfrac1r\sup\limits_{u\in B(x,r)}|u-x|_X
	\ \text{ and }\ H(X)=\inf_{x\in X^d}H(X,x).
	$$
	The number $H(X)$ is called the \textit{hermeticity} of $X$.
        A metric space $X$ is called \textit{hermetic} if $H(X)>0$.
\end{definition}
    % A metric space $X$ is called \textit{hermetic} if there is $\lambda <1$ such the ${p^s(X,x)\le\lambda}$ for any non-isolated point $x\in X$. Define the numbers
Observe that $H(X,x)=0$ for any $x\in X\setminus X^d$. 
Obviously, we have ${0\le H(X,x)\le 1}$ for any $x\in X$. 

\begin{proposition}
	Let $X$ be a metric space and $x\in X$. Then the following equality holds $p^s(X,x)=1-H(X,x)$. In particular, $X$ is hermetic if and only if
        there is $\lambda <1$ such the ${p^s(X,x)\le\lambda}$ for any non-isolated point $x\in X$.
\end{proposition}
\begin{proof} Denote $q=1-H(X,x)$ and $p=p^s(X,x)$. Observe that
	\begin{align}
		q&=1-\liminf\limits_{r\to 0^+}\tfrac1r\sup\limits_{u\in B(x,r)}|u-x|_X \nonumber \\
		&=\limsup\limits_{r\to0^+}\tfrac1r\Big(r-\sup\limits_{u\in B(x,r)}|u-x|_X\Big) \nonumber \\
		& \label{eq:qlimsup}  =\limsup\limits_{r\to0^+}\tfrac1r\sup\big\{h\in[0;r]:S(x,r-h,r)=
		\varnothing\big\}. 
	\end{align}
Consequently, $q\le p$. To  prove the inverse inequality consider $\gamma<p$ and $\varepsilon>0$. Thus,
$$
p=\limsup\limits_{r\to0^+}\tfrac1r\sup\Big\{h\in[0;r]:\exists t\in[0;r-h]\  \big|\ S(x,t,t+h)=
	\varnothing\Big\}>\gamma,
$$
so, there exists $r<\varepsilon$ such that 
$$
\sup\big\{h\in[0;r]:\exists t\in[0;r-h]\  \big|\ S(x,t,t+h)=
\varnothing\big\}>\gamma r.
$$
So, there are $h\le r$ and $t\le r-h$ such that $S(x,t,t+h)=\varnothing$ and $h>\gamma r$. 
Put $r_1=t+h$. Then $r_1\le r<\varepsilon$ and $S(x,r_1-h,r_1)=S(x,t,t+h)=\varnothing$. So, $h>\gamma r\ge \gamma r_1$. 
Thus, by \eqref{eq:qlimsup} $q\ge \gamma$. Therefore, $q=p$. 
\end{proof}
If we consider the function $d_x(u)=|u-x|_X$, $u\in X$, then 
$$
H(X,x)=\lip d_x(x)=\lim_{r\to0^+}\lip_r d_x(x).
$$
For any $\lambda<H(X,x)$ we  define the \emph{radius of hermeticity} as the number
$$
RH_\lambda(X,x)=\sup\limits\big\{r>0:\lip_r d_x(x)>\lambda\big\}.
$$
Clearly, $RH_\lambda(X,x)>0$.

The proof of the following propositions are straightforward.
\begin{proposition}\label{prop:SmallLipTW1}
	Let $X$ be a metric space, $x\in X$ and $\lambda\ge 0$. If $\lambda<H(X,x)$ then for any $0\le r<RH_\lambda(X,x)$ there exists $u\in X$ such that 
	\begin{equation}\label{equ:SmallLipTW0}
		\lambda r\le |x-u|_X\le r
	\end{equation}
	Conversely, if there is $\varepsilon>0$ such that for any $0\le r<\varepsilon$ there is $u\in X$ with (\ref{equ:SmallLipTW0}) then $H(X,x)\ge\lambda$ and $RH_\lambda(X,x)\ge\varepsilon$.
\end{proposition}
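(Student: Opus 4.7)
The plan is to translate the existence of points $u$ satisfying (\ref{equ:SmallLipTW0}) into bounds on $\Lip^r d_x(x)$ and then transport these through the defining suprema and infima. The key identity is simply $\Lip^r d_x(x)=\tfrac1r\sup_{u\in B(x,r)}|u-x|_X$, so that $\Lip^r d_x(x)>\lambda$ is equivalent to the existence of some $u\in B(x,r)$ with $|u-x|_X>\lambda r$, and the non-strict version $\Lip^r d_x(x)\ge\lambda$ is obtained by letting $\varrho\to r^-$ through points $u_\varrho\in B(x,\varrho)$ with $|u_\varrho-x|_X\ge\lambda\varrho$.

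For the forward direction I would fix $0\le r<RD_\lambda(X,x)$. The case $r=0$ is handled by $u=x$. For $r>0$, the definition of $RD_\lambda$ as a supremum supplies some $\tilde\varepsilon>r$ with $\lip_{\tilde\varepsilon}d_x(x)>\lambda$; unfolding the infimum in the definition of $\lip_{\tilde\varepsilon}$ shows $\Lip^\varrho d_x(x)>\lambda$ for every $\varrho<\tilde\varepsilon$, in particular $\Lip^r d_x(x)>\lambda$. The translation above then produces $u\in B(x,r)$ with $\lambda r<|u-x|_X<r$, which is stronger than the required inequality (\ref{equ:SmallLipTW0}).

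For the converse I would fix $0<r<\varepsilon$ and, for each $\varrho\in(0,r)$, apply the hypothesis at $\varrho$ to obtain $u_\varrho$ with $\lambda\varrho\le|x-u_\varrho|_X\le\varrho<r$, hence $u_\varrho\in B(x,r)$. Taking the supremum over $B(x,r)$ and letting $\varrho\to r^-$ yields $\Lip^r d_x(x)\ge\lambda$; taking the infimum over $r\in(0,\varepsilon)$ gives $\lip_\varepsilon d_x(x)\ge\lambda$; and monotonicity (\ref{eqn:Lip_monotone}) together with passage to $\varepsilon\to 0^+$ gives $D(X,x)=\lip d_x(x)\ge\lambda$. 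The final assertion $RD_\lambda(X,x)\ge\varepsilon$ is the only delicate point, since my argument produces only the non-strict bound $\lip_\varepsilon d_x(x)\ge\lambda$ whereas the definition of $RD_\lambda$ uses strict inequality; I would handle this by rerunning the converse with any $\lambda'<\lambda$ in place of $\lambda$ (the hypothesis only weakens), obtaining $\lip_\varepsilon d_x(x)\ge\lambda>\lambda'$ and hence $\varepsilon$ lies in the set defining $RD_{\lambda'}(X,x)$, giving $RD_{\lambda'}(X,x)\ge\varepsilon$ for every $\lambda'<\lambda$ — which is the content the proposition really needs, modulo a harmless convention at the strict/non-strict boundary. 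I expect this tiny gap to be the only step requiring any thought; the rest is a direct unfolding of definitions, consistent with the paper's assertion that the proof is straightforward.
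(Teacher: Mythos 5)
Your forward direction is correct, and so is the first half of the converse, $D(X,x)\ge\lambda$; the paper offers no written proof (calling the proposition ``straightforward''), so there is nothing to compare against. However, the gap you flag at the end is real, and your proposed repair does not close it: the map $\lambda\mapsto RD_\lambda(X,x)$ is \emph{nonincreasing}, because lowering $\lambda$ only enlarges the set $\{\varepsilon'>0:\lip_{\varepsilon'}d_x(x)>\lambda\}$. Hence $RD_{\lambda'}(X,x)\ge RD_\lambda(X,x)$ whenever $\lambda'<\lambda$, and establishing $RD_{\lambda'}(X,x)\ge\varepsilon$ for every $\lambda'<\lambda$ gives no information whatsoever about $RD_\lambda(X,x)$ --- the monotonicity runs in exactly the wrong direction for your deduction.

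Worse, the assertion $RD_\lambda(X,x)\ge\varepsilon$ appears to be false as written, so no sharper estimate can close the gap. Take $X=\{2^{-n}:0\le n\le N\}\cup[0,2^{-N-1}]\subset\mathbb R$, $x=0$, $\lambda=\tfrac12$, $\varepsilon=1$, with $N\ge1$. For every $0\le r<1$ there is $u\in X$ with $\tfrac r2\le u\le r$: take $u=r$ if $r\le 2^{-N-1}$; take $u=2^{-N-1}$ if $2^{-N-1}<r\le 2^{-N}$; and take $u=2^{-n}$ if $2^{-n}<r\le 2^{-n+1}$ for $1\le n\le N$. Moreover $D(X,0)=1>\lambda$, since $[0,2^{-N-1}]\subset X$ forces $\Lip^r d_0(0)=1$ for all $r\le 2^{-N-1}$. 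Yet $B(0,2^{-N})\cap X=[0,2^{-N-1}]$, so $\Lip^{2^{-N}}d_0(0)=\tfrac{2^{-N-1}}{2^{-N}}=\tfrac12=\lambda$, whence $\lip_{\varepsilon'}d_0(0)\le\lambda$ for every $\varepsilon'>2^{-N}$ and thus $RD_{1/2}(X,0)\le 2^{-N}<\varepsilon$. So the strict/non-strict mismatch you describe as ``tiny'' is substantive: the genuine conclusions of the converse are only $D(X,x)\ge\lambda$ and $\lip_{\varepsilon'}d_x(x)\ge\lambda$ for $0<\varepsilon'\le\varepsilon$ (equivalently $RD_{\lambda'}(X,x)\ge\varepsilon$ for all $\lambda'<\lambda$), not $RD_\lambda(X,x)\ge\varepsilon$. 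This does not harm the paper, which only invokes the forward implication (in Lemma~\ref{lem:SmallLipTW}), but your proof of the converse as stated is incomplete and cannot be completed.
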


\begin{proposition}\label{prop:SmallLipTW2}
	Let $M$ be a metric subspace of some normed space $X$ such that $M\subseteq \Int\overline M\ne\varnothing$. Then, $H(M,x)=1$ for any $x\in M$. In particular, $M$ is hermetic.
\end{proposition}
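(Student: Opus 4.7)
The plan is to fix $x \in M$ and any $\lambda \in [0,1)$, show that $D(M,x) \geq \lambda$ by invoking Proposition~\ref{prop:SmallLipTW1}, and then let $\lambda \to 1^-$. The upper bound $D(M,x) \le 1$ is immediate from the definition (since $\sup_{u \in B_M(x,r)}|u-x|_X \le r$), so this is enough to conclude $D(M,x) = 1$.

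The first step uses the hypothesis $M \subseteq \Int\overline{M}$ to pick $\delta > 0$ with $B_X(x,\delta) \subseteq \overline{M}$. The core of the argument then exploits the norm structure of $X$: I would fix any unit vector $v \in X$ and, for each $r \in (0,\delta)$, set
\[
y_r = x + \tfrac{1+\lambda}{2}\, r\, v,
\]
so that $|y_r - x|_X = \tfrac{1+\lambda}{2} r$ lies strictly between $\lambda r$ and $r$. In particular $y_r \in B_X(x,\delta) \subseteq \overline{M}$. Density of $M$ in $\overline{M}$ then yields a point $u_r \in M$ close enough to $y_r$ that $\lambda r \le |u_r - x|_X \le r$ (explicitly, choose the approximation error below $\tfrac{1-\lambda}{4} r$). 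Since the metric on $M$ is the restriction of $|\cdot|_X$, Proposition~\ref{prop:SmallLipTW1} applied to the metric space $M$ immediately gives $D(M,x) \ge \lambda$ (and indeed $RD_\lambda(M,x) \ge \delta$).

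I do not expect a genuine obstacle here. The two ingredients are (i) the norm on $X$ supplies a point at every prescribed radius from $x$, and (ii) density of $M$ in $\overline{M}$ lets one perturb this point into $M$ while preserving the two-sided distance estimate. The one place to be mildly careful is that the defect is defined intrinsically on $M$, so one needs the induced metric on $M$ to coincide with $|\cdot|_X$ in order to legitimately feed the constructed $u_r$ into Proposition~\ref{prop:SmallLipTW1}; this is automatic since $M$ is a metric subspace of $X$.
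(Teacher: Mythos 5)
The paper itself offers no proof of this proposition, dismissing it together with Proposition~\ref{prop:SmallLipTW1} as ``straightforward,'' so there is no official argument to compare against; your task here is simply to fill the gap, and you do so correctly. The two ingredients you identify are exactly right: the norm gives a point $y_r = x + \tfrac{1+\lambda}{2} r v$ at the prescribed distance $\tfrac{1+\lambda}{2}r$ from $x$, the inclusion $M \subseteq \Int\overline M$ guarantees $y_r \in \overline M$ for $r$ small, and density of $M$ in $\overline M$ lets you replace $y_r$ by $u_r \in M$ while keeping $|u_r - x|_X$ in $[\lambda r, r]$. Your error budget $\tfrac{1-\lambda}{4}r$ works: the lower bound becomes $\tfrac{1+3\lambda}{4}r \ge \lambda r$ (using $\lambda < 1$) and the upper bound $\tfrac{3+\lambda}{4}r < r$. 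Feeding this into the converse direction of Proposition~\ref{prop:SmallLipTW1} gives $D(M,x) \ge \lambda$, and sending $\lambda \to 1^-$ together with the trivial upper bound $D(M,x) \le 1$ finishes the argument. You are also right that the intrinsic metric on $M$ is the restriction of $|\cdot|_X$, so there is no mismatch in applying Proposition~\ref{prop:SmallLipTW1} to the metric space $M$. The one thing worth flagging, though it is a defect of the statement rather than of your proof, is that the argument (and the proposition itself) implicitly needs $X \ne \{0\}$ so that a unit vector $v$ exists; in the degenerate case $X = \{0\}$ with $M = \{0\}$ the hypotheses hold but $D(M,0) = 0$, so the claim fails. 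Under the standing convention that $X$ is a nontrivial normed space, your proof is complete.
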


\begin{lemma}\label{lem:SmallLipTW}
	Let $X$ be a metric space, $x\in X^d$, $0<\lambda<H(X,x)$, 
    $0<\varepsilon<RH_\lambda(X,x)$, $S$ be a maximal $\varepsilon$-separated 
    set in $X$ and $\varphi(\,\cdot\,)=d(\,\cdot\, ,S)$. 
    Then there exists $u\in B[x,\varepsilon]$ such that
	\[
    	|\varphi(u)-\varphi(x)|\ge \tfrac{\lambda\varepsilon}{8}.
    \]
\end{lemma}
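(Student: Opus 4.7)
The plan is to split into two cases depending on the size of $\varphi(x)$. First, since $S$ is $\varepsilon$-dense by Proposition~\ref{prop:sep_and_dense_set}$(iii),(iv)$, we have $\varphi(x)=d(x,S)<\varepsilon$, and recall $\lambda<D(X,x)\le 1$. If $\varphi(x)\ge\lambda\varepsilon/8$, I would simply pick any $s\in S$ with $|x-s|_X<\varepsilon$ and set $u=s\in B[x,\varepsilon]$; then $\varphi(u)=d(s,S)=0$, so $|\varphi(u)-\varphi(x)|=\varphi(x)\ge\lambda\varepsilon/8$ and this easy case is finished.

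For the remaining case $\varphi(x)<\lambda\varepsilon/8$ the plan is to produce a point $u$ at distance of order $\varepsilon$ from $x$ that is far from \emph{every} point of $S$, not merely from the closest one. The crucial geometric input is that $\varepsilon$-separation of $S$ forces $S\cap B(x,\varepsilon/2)$ to contain at most one element (two such elements would be at distance $<\varepsilon$ apart); since $\varphi(x)<\lambda\varepsilon/8<\varepsilon/2$ there is exactly one such element $s_0$, and the infimum in $\varphi(x)$ must be attained at it, giving $|x-s_0|_X=\varphi(x)<\lambda\varepsilon/8$. The triangle inequality through $s_0$ then gives $|s-x|_X\ge\varepsilon-\lambda\varepsilon/8$ for every other $s\in S$. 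Applying the defect property (Proposition~\ref{prop:SmallLipTW1}) at scale $r=\varepsilon/2<\varepsilon<RD_\lambda(X,x)$ produces $u\in X$ with
\[
\lambda\varepsilon/2\le|u-x|_X\le\varepsilon/2,
\]
so $u\in B[x,\varepsilon]$. A direct triangle-inequality bound gives $|u-s_0|_X\ge\lambda\varepsilon/2-\lambda\varepsilon/8=3\lambda\varepsilon/8$ and, for any $s\ne s_0$, $|u-s|_X\ge(\varepsilon-\lambda\varepsilon/8)-\varepsilon/2=\varepsilon/2-\lambda\varepsilon/8\ge 3\lambda\varepsilon/8$ (the last step just using $\lambda\le 1$). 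Taking the infimum over $s\in S$ yields $\varphi(u)\ge 3\lambda\varepsilon/8$, and hence $\varphi(u)-\varphi(x)\ge\lambda\varepsilon/4\ge\lambda\varepsilon/8$.

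The principal obstacle is the second case, specifically the step of locating a \emph{uniquely} positioned nearby point $s_0$: without uniqueness, some other point of $S$ could sit close to the candidate $u$ and spoil the lower bound on $\varphi(u)=d(u,S)$. Maximal $\varepsilon$-separation is exactly what rules this out, confining the ``threat'' to $s_0$ alone while pushing every other point of $S$ to distance at least $\varepsilon-\lambda\varepsilon/8$ from $x$; combined with $|u-x|_X\le\varepsilon/2$, this is what keeps $u$ a definite distance from all of $S$ simultaneously.
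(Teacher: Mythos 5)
Your proof is correct and follows essentially the same strategy as the paper's: split on whether $\varphi(x)$ is larger or smaller than $\lambda\varepsilon/8$, handle the first case by jumping to a point of $S$, and in the second case use Proposition~\ref{prop:SmallLipTW1} to find a point $u$ at a definite distance from $x$ and then exploit $\varepsilon$-separation to bound $\varphi(u)$ from below. The only differences are cosmetic: you take $r=\varepsilon/2$ where the paper takes $r=3\varepsilon/8$, and you explicitly establish that the nearest point $s_0\in S$ to $x$ is unique in $B(x,\varepsilon/2)$ and that the infimum defining $\varphi(x)$ is attained there --- a correct but unnecessary refinement, since the paper simply picks any $s\in S$ with $|x-s|_X<\lambda\varepsilon/8$ (which exists because $\varphi(x)<\lambda\varepsilon/8$) and the rest of the estimate goes through without knowing $s$ is the exact minimizer.
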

\begin{proof}
	Set $\alpha=\frac{\lambda}{8}$ and prove that $|\varphi(u)-\varphi(x)|\ge\alpha\varepsilon$ for some $u\in B[x,\varepsilon]$.
	
	Firstly, we consider the case $\varphi(x)\ge\alpha\varepsilon$. Since $\varphi(x)=d(x,S)<\varepsilon$ by
    % Proposition~\ref{prop:TWfunction_is_Lipschitz}$(iii)$, we conclude
    {Proposition~\ref{prop:sep_and_dense_set}$(iv)$}, we conclude
    that there is a point $u\in S$ such that $|u-x|_X<\varepsilon$. Therefore, $\varphi(u)=0$ and then
	$$
	\big|\varphi(u)-\varphi(x)\big|=\varphi(x)\ge\alpha\varepsilon.
	$$
	
	Secondly, we consider the case  $\varphi(x)<\alpha\varepsilon$. Then there exists $s\in S$ such that $|s-x|_X<\alpha\varepsilon$. Set $r=\frac{3\varepsilon}{8}$. Since $0<r<\varepsilon<RH_\lambda(X,x)$, there is $u\in X$ with 
	$$
	\lambda r\le |u-x|_X\le r 
	$$
	by Proposition~\ref{prop:SmallLipTW1}. Since $\lambda<H(X,x)\le 1$, we have $\alpha=\frac{\lambda}{8}<\frac 18$.
	Therefore,
	$$
	|u-s|_X\le |u-x|_X+|x-s|_X<r+\alpha\varepsilon<r+\tfrac{\varepsilon}{8}=\tfrac{3\varepsilon}{8}+\tfrac{\varepsilon}{8}=\tfrac{\varepsilon}{2}.
	$$
	Let us consider some $t\in S\setminus\{s\}$. Since $S$ is $\varepsilon$-separated, $|s-t|_X\ge\varepsilon$. Hence,
	$$
	|u-t|_X\ge|t-s|_X-|u-s|_X\ge\varepsilon-\tfrac{\varepsilon}{2}=\tfrac{\varepsilon}{2}\ge |u-s|_X.
	$$
	Thus, $\varphi(u)=d(u,S)=|u-s|_X$. Therefore,
	$$
	\varphi(u)=|u-s|_X\ge|u-x|_X-|s-x|_X\ge\lambda r-\alpha\varepsilon=\tfrac{3\lambda\varepsilon}{8}-\alpha\varepsilon=3\alpha\varepsilon-\alpha\varepsilon=2\alpha\varepsilon,
	$$
	and then
	$$
	\big|\varphi(u)-\varphi(x)\big|\ge \varphi(u)-\varphi(x)\ge 2\alpha\varepsilon-\alpha\varepsilon=\alpha\varepsilon.
	$$
	Thus, in the both cases we have that  $|\varphi(u)-\varphi(x)|\ge\alpha\varepsilon$ for some point ${u\in B[x,\varepsilon]}$.
\end{proof}

\begin{theorem}\label{thm:SmallLipTW}
	Let $X$ be a hermetic metric space, $a>b>1$ such that 
	\begin{equation}\label{equ:SmallLipTW}
		\frac{2b}{a-b}+\frac{1}{b-1}<\frac{H(X)}{8},
	\end{equation}
	and $f$ be TW-function of type $(a,b)$. Then $\lip f(x)=\infty$ for any $x\in X^d$.
\end{theorem}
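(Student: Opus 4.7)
The plan is to mimic the proof of Theorem~\ref{thm:BigLipTW}, but now I need to produce a test point $u$ at \emph{every} sufficiently small scale $r$, not merely along a subsequence; this is exactly what the defect hypothesis, through Lemma~\ref{lem:SmallLipTW}, provides.

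Fix $x \in X^d$, so $D(X,x) \ge D(X) > 0$. By (\ref{equ:SmallLipTW}) I can choose a real number $\lambda$ with
\[
\frac{1}{b-1} + \frac{2b}{a-b} \;<\; \frac{\lambda}{8} \;<\; \frac{D(X)}{8},
\]
so that $\lambda < D(X,x)$, the radius $R := RD_\lambda(X,x)$ is strictly positive, and the constant $C := \frac{\lambda}{8} - \frac{1}{b-1} - \frac{2b}{a-b}$ is strictly positive. Write $\varepsilon_n := a^{-n}$.

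Given $r \in (0,R)$ with $r<1$, let $n = n(r)$ be the least positive integer with $\varepsilon_n < r$; then $\varepsilon_n < r \le a\varepsilon_n$ and $\varepsilon_n < R$. Applying Lemma~\ref{lem:SmallLipTW} to the maximal $\varepsilon_n$-separated set $S_n$ produces $u \in B[x,\varepsilon_n]$ with $|\varphi_n(u)-\varphi_n(x)| \ge \lambda\varepsilon_n/8$. Splitting $f = s_n + b^n\varphi_n + r_{n+1}$ and bounding the remaining two pieces by Proposition~\ref{prop:TWfunction_is_Lipschitz}(iii), $|s_n(u)-s_n(x)| \le \frac{b^n}{b-1}|u-x|_X \le \frac{b^n\varepsilon_n}{b-1}$, and by Proposition~\ref{prop:TWfunction_is_Lipschitz}(ii), $|r_{n+1}(u)-r_{n+1}(x)| \le \frac{2b^{n+1}}{(a-b)a^n} = \frac{2b}{a-b}\,b^n\varepsilon_n$, one obtains
\[
|f(u)-f(x)| \;\ge\; b^n\varepsilon_n\Bigl(\frac{\lambda}{8}-\frac{1}{b-1}-\frac{2b}{a-b}\Bigr) \;=\; C\,b^n\varepsilon_n.
\]
Since $|u-x|_X \le \varepsilon_n < r$ gives $u\in B(x,r)$, and $r\le a\varepsilon_n$, it follows that
\[
\Lip^r f(x) \;\ge\; \frac{|f(u)-f(x)|}{r} \;\ge\; \frac{C\,b^n\varepsilon_n}{a\varepsilon_n} \;=\; \frac{C}{a}\,b^{n(r)}.
\]
Because $n(r)\to\infty$ as $r\to 0^+$ and $b>1$, this forces $\liminf_{r\to 0^+}\Lip^r f(x) = \infty$, i.e., $\lip f(x) = \infty$.

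The main obstacle is that all three terms in the decomposition $f = s_n + b^n\varphi_n + r_{n+1}$ contribute at exactly the same order $b^n\varepsilon_n = (b/a)^n$, which by itself tends to $0$; only the \emph{ratio} to $r$ survives, and this survives only when the main term dominates the other two. The strict positivity of $C$ is precisely what makes the $b^n\varphi_n$ term win, and hypothesis (\ref{equ:SmallLipTW}) is the quantitative translation of $C > 0$, with the factor $\tfrac18$ descending directly from the corresponding constant in Lemma~\ref{lem:SmallLipTW}.
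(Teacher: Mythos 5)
Your proof is correct and follows essentially the same strategy as the paper's: choose $\lambda$ with $\lambda < D(X)$ and $\gamma := \frac{\lambda}{8}-\frac{1}{b-1}-\frac{2b}{a-b}>0$, locate the scale $n(r)$ with $\varepsilon_n \approx r$, invoke Lemma~\ref{lem:SmallLipTW} at that scale, and compare the main term $b^n\varphi_n$ against the Lipschitz bound for $s_n$ and the sup bound for $r_{n+1}$. The only (harmless) difference is your choice of $n(r)$ via the strict inequality $\varepsilon_n < r$, which cleanly guarantees $u \in B(x,r)$; the paper uses $\varepsilon_n \le r < \varepsilon_{n-1}$ and implicitly relies on the equivalence of closed-ball and open-ball versions of $\Lip^r$ established earlier.
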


\begin{proof}
	Let $S_n$, $\varphi_n$, $s_n$, $r_n$ be such as in Definition~\ref{def:TWfunction}. 
	Fix $x\in X^d$. Let us prove that $\lip f(x)=\infty$.
	Since (\ref{equ:SmallLipTW}), we can pick  $\lambda>0$ such that $\lambda<H(X)$ and 
	\begin{equation}\label{equ:SmallLipTW1}
		\gamma:=\frac{\lambda}{8}-\frac{2b}{a-b}-\frac{1}{b-1}>0.
	\end{equation}
	Therefore, $RH_\lambda(X,x)>0$.	Let $\delta=\min\big\{1,RH_\lambda(X,x)\big\}$ and fix $0<r<\delta$. Set $\varepsilon_n=\frac{1}{a^n}$. Since $\varepsilon_0=1\ge\delta$ and $\varepsilon_n\downarrow 0$, there exists $n=n(r)\in\mathbb{N}$ such that $\varepsilon_n\le r<\varepsilon_{n-1}$. Put $\varepsilon=\varepsilon(r)=\varepsilon_n$. Observe that  $\varepsilon(r)\to 0$ and then $n(r)\to\infty$  as $r\to 0^+$. Since $\varepsilon=\varepsilon_n\le r<\delta\le RH_\lambda(X,x)$ we can us Lemma~\ref{lem:SmallLipTW} for the set $S=S_n$ and the function $\varphi=\varphi_n$. So, there is $u\in B[x,\varepsilon]$ with 
	$$
	\big|\varphi(u)-\varphi(x)\big|\ge\tfrac{\lambda\varepsilon}{8}.
	$$
	Denote $g=s_n$ and $h=r_{n+1}$. Then
	$$
	f=g+b^n\varphi+h
	$$
	By Proposition~\ref{prop:TWfunction_is_Lipschitz} the function $g$ is Lipschitz with the constant $\frac{b^n}{b-1}$ and $0\le h\le\frac {b^{n+1}}{(a-b)a^{n}}$. Therefore,
	$$
	\big|g(u)-g(x)\big|\le \tfrac{b^n}{b-1}|u-x|_X\le\tfrac{1}{b-1}b^n\varepsilon,
	$$
	$$
	\big|h(u)-h(x)\big|\le\big|h(u)\big|+\big|h(x)\big|\le \tfrac {2b^{n+1}}{(a-b)a^{n}}
	=\tfrac{2b}{a-b}b^n\varepsilon,
	$$
	$$
	\big|b^n\varphi(u)-b^n\varphi(x)\big| \geq  b^n\tfrac{\lambda \varepsilon}8=\tfrac{\lambda}{8}b^n\varepsilon.
	$$
	Thus,
	$$
	\big|f(u)-f(x)\big|\ge \big|b^n\varphi(u)-b^n\varphi(x)\big| - \big|g(u)-g(x)\big| -\big|h(u)-h(x)\big|
	\ge
	$$
	$$
	\ge  \tfrac{\lambda}{8}b^n\varepsilon-\tfrac{1}{b-1}b^n\varepsilon-\tfrac{2b}{a-b}b^n\varepsilon
	=\Big(\tfrac{\lambda}{8}-\tfrac{1}{b-1}-\tfrac{2b}{a-b}\Big)b^n\varepsilon=\gamma b^n\varepsilon=
	$$
	$$
	=\gamma b^n\varepsilon_n=\tfrac{\gamma b^n}{a^n}=\tfrac{\gamma }{a}b^n\varepsilon_{n-1}\ge\tfrac{\gamma}{a} b^n r.
	$$
	Since, $|u-x|_{X}\le \varepsilon < r$ and $\frac{1}{r}\vert f(u)-f(x)\vert \geq \tfrac{\gamma}{a} b^n$, we conclude that
	$$
	\Lip^r f(x)\ge\tfrac{\gamma}{a}b^n, \mbox{ for any } r<\delta.
	$$
	But $n=n(r)\to\infty$ as $r\to0^+$ and $b>1$. Thus,
	$$
	\lip f(x)=\liminf_{r\to 0^+}\Lip^r f(x)\ge\lim_{r\to0^+}\tfrac{\gamma}{a}b^{n(r)}=\infty.
	$$
	So, $\lip f(x)=\infty$ for any $x\in X^d$.
\end{proof}

\section{$\ell$-$L$-$\mathbb{L}$-problem for an open set}

In this section we give a partial solution of
Problem \ref{prob:ellLbL1} for the case
where $A=B$ is an open set and $C=\overline{A}$.

\begin{lemma}\label{lem:construction_of_g}
	Let $X$ be a metric space, $G$ be an open subset of $X$, $F=X\setminus G$  and $\alpha>0$. Then there exist $\alpha$-Lipschitz functions $g:X\to[0;\alpha]$ such that $F=g^{-1}(0)$.
\end{lemma}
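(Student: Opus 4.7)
The plan is to use the distance function to the closed set $F$, truncated and scaled appropriately. Recall that for any nonempty closed set $F$ in a metric space $X$, the function $x \mapsto d(x,F) = \inf_{y\in F}|x-y|_X$ is $1$-Lipschitz (this is a classical fact, and in fact appears as a special case of Proposition~\ref{prop:sep_and_dense_set}$(iv)$), and $d(x,F)=0$ if and only if $x\in\overline F=F$.

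Concretely, I would define
$$
g(x) = \alpha\,\min\bigl\{d(x,F),\,1\bigr\}, \qquad x\in X,
$$
(with the convention $d(x,\emptyset)=+\infty$, so $g\equiv\alpha$ in the trivial case $F=\emptyset$, for which the conclusion $F=g^{-1}(0)$ still holds).

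The verification splits into three routine checks. First, $g$ takes values in $[0,\alpha]$ by the very definition of the truncation. Second, $g^{-1}(0)=F$: since $F$ is closed, $d(x,F)=0$ iff $x\in F$, so $\min\{d(x,F),1\}=0$ iff $x\in F$, and multiplication by $\alpha>0$ does not change the zero set. Third, $g$ is $\alpha$-Lipschitz: the map $t\mapsto \min\{t,1\}$ is $1$-Lipschitz on $[0,+\infty)$, and $d(\cdot,F)$ is $1$-Lipschitz, so their composition is $1$-Lipschitz; multiplying by $\alpha$ yields an $\alpha$-Lipschitz function.

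There is no real obstacle here; the only subtlety is the edge case $F=\emptyset$ (equivalently $G=X$), which is handled separately by taking $g$ to be the constant function $\alpha$. Otherwise, the argument is completely routine.
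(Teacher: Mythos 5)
Your proof is correct and uses essentially the same construction as the paper: $g(x)=\alpha\min\{1,d(x,F)\}$, with the empty-$F$ case handled by taking $g\equiv\alpha$. The only superficial difference is that you invoke the composition rule for Lipschitz maps to get the Lipschitz bound, whereas the paper verifies the inequality $|g(x)-g(y)|\le\alpha|x-y|_X$ directly; these are the same argument.
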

\begin{proof} 
	If $F=\varnothing$ then we put $g(x)=\alpha$ for any $x\in X$. Suppose that $F\ne\varnothing$. 
	Let us define a function $g$ as
	$$
	g(x)=\alpha\min\big\{1,d(x,F)\big\},\ \ \ x\in X.
	$$
        Consider $x,y\in X$. Since 
	$d(x,F)\le |x-y|_X+d(y,F)$,
        we conclude
	\begin{align*}
		g(x)&=\alpha\min\big\{1,d(x,F)\big\}
		\le
		\alpha\min\big\{1,|x-y|_X+d(y,F)\big\}\\
		&\le
		\alpha|x-y|_X+\alpha\min\big\{1,d(y,F)\big\}
		=\alpha|x-y|_X+g(y).
	\end{align*}
	Replacing $x$ with $y$ we obtain that $g(y)\le\alpha|x-y|_X+g(x)$. Thus,
	$$
	\big|g(x)-g(y)\big|\le\alpha|x-y|_X,\ \ \ \text{for any }x,y\in X,
	$$
	and then $g$ is $\alpha$-Lipschitz. On the other hand, since $F$ is closed,
	$$
	g(x)=0\Leftrightarrow d(x,F)=0\Leftrightarrow x\in F, \ \ \ x\in X.
	$$
	Hence, $F=g^{-1}(0)$, and then $g$ is such as we need.
\end{proof}

\begin{theorem} 
Let $X$ be a hermetic metric space and $G\subseteq X^d$ be an open set in $X$. 
Then there exists a continuous function $f:X\to \mathbb{R}$ such that $L^\infty(f)=\ell^\infty(f)=G$ and $\mathbb{L}^\infty(f)=\overline{G}$.
\end{theorem}
\begin{proof} Since
	$$
	\lim_{b\to\infty}\lim_{a\to\infty}\Big(\frac{2b}{a-b}+\frac{1}{b-1}\Big)=0<\frac{H(X)}{8},
	$$
	we can chose $b>1$ and then $a>b$ such that inequality	(\ref{equ:SmallLipTW}) holds. Let $h=f_{a,b}$ be a TW-function of type $(a,b)$. By Theorem~\ref{thm:SmallLipTW} and  Proposition~\ref{prop:TWfunction_is_Lipschitz} we have that $\lip h(x)=\infty$ and $0\le h(x)\le\frac{1}{\alpha}$ 
    for any $x\in X$, where $\alpha=\frac{a-b}{a}$. Let $F=X\setminus G$ and $U=\Int F=X\setminus \overline{G}$. 
    Let $g$ be given by Lemma~\ref{lem:construction_of_g} and define the function $f:X\to[0;1]$ by $f(x)=g(x)h(x)$ for any $x\in X$.
	
    Consider $x\in F$ and $u\in X$. Therefore,
    \begin{align*}
    	\big|f(u)-f(x)\big|&=\big|g(u)h(u)-g(x)h(x)\big|=g(u)h(u)\le\tfrac1\alpha g(u)\\
    	&=\tfrac1\alpha\big(g(u)-g(x)\big)\le \tfrac1\alpha\cdot\alpha\big|u-x\big|_X=\big|u-x\big|_X,
    \end{align*}
    	and so, $\Lip f(x)\le 1$. Hence, $\ell^{\infty}(f)\subseteq L^{\infty}(f)\subseteq X\setminus F=G$.
        On the other hand, if $x\in U$, then the above inequality yields $\LIP f(x)\le 1$. 
        Therefore, $\mathbb L^\infty(f)\subseteq X\setminus U=\overline{G}$.
    	
    Now we consider $x\in G$ and we want to prove $\lip f(x)=+\infty$. 
    For any $r>0$ and $u\in B(x,r)$ we have that
    \begin{align*}
    	\big|f(u)-f(x)\big|&=\big|g(u)h(u)-g(x)h(x)\big|\\
    	&=\Big| g(x)\big(h(x)-h(u)\big)-h(u)\big(g(u)-g(x)\big)\Big|\\
    	&\ge g(x)\big|h(x)-h(u)\big|-h(u)\big|g(u)-g(x)\big|\\
    	&\ge g(x)\big|h(x)-h(u)\big|-\tfrac{1}{\alpha}\cdot \alpha|x-u|\\
    	&\ge g(x)\big|h(x)-h(u)\big|-r.
    \end{align*}
    Hence,
    \begin{align*}
    	\Lip^r f(x)&=\sup\limits_{u\in B(x,r)}\tfrac1r\big|f(u)-f(x)\big|\\
    	&\ge g(x)\sup\limits_{u\in B(x,r)}\tfrac1r\big|h(u)-h(x)\big|-1\\
    	&=g(x)\Lip^rh(x)-1.
    \end{align*}
    Consequently, since $g(x)>0$, we conclude that
    \begin{align*}
    	\lip f(x)&=\liminf_{r\to 0^+}\Lip^r f(x)\\
    	&\ge g(x)\liminf_{r\to 0^+}\Lip^r h(x)-1\\
    	&=g(x)\lip h(x)-1=+\infty. 
    \end{align*}
    Hence, $\lip f(x)=\infty$ on $G$, that is 
    $G\subseteq\ell^\infty(f)$. 
    We have already proven that $\ell^{\infty}(f)\subseteq L^{\infty}(f)\subseteq G$ and
    $\mathbb L^\infty(f)\subseteq\overline{G}$. So, $G=\ell^{\infty}(f)=L^{\infty}(f)\subseteq\mathbb{L}^{\infty}(f)$.
    Since $\mathbb{L}^{\infty}(f)$ is closed, we also have $\mathbb{L}^{\infty}(f)=\overline{G}$.
%    On the other hand, if $x\in U$, then the above inequality yields $\LIP f(x)\le 1$. 
\end{proof}

Using Theorem~\ref{thm:BigLipTW} instead of Theorem~\ref{thm:SmallLipTW} in the previous  proof  we obtain the following result.

\begin{theorem} Let $X$ be a metric space without isolated point  and $G$ be an open subset of $X$. Then there exists a continuous function $f:X\to \mathbb{R}$ such that $L^\infty(f)=G$ and $\mathbb{L}^\infty(f)=\overline{G}$.
\end{theorem}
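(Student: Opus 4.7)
The plan is to imitate the proof of the preceding theorem almost verbatim, with one key substitution: invoke Theorem~\ref{thm:BigLipTW} in place of Theorem~\ref{thm:SmallLipTW} and track $\Lip$ throughout rather than $\lip$. Since Theorem~\ref{thm:BigLipTW} imposes no defect hypothesis but requires only $a>b>2$, no assumption on $D(X)$ is needed here.

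First I would fix any $a>b>2$ and let $h$ be a TW-function of monotonic type $(a,b)$ on $X$. Theorem~\ref{thm:BigLipTW} then gives $\Lip h(x)=+\infty$ at every point of $X^d=X$ (since $X$ has no isolated points), and Proposition~\ref{prop:TWfunction_is_Lipschitz}(i) provides the bound $0\le h\le\frac{a}{a-b}$. Next, with $F=X\setminus G$ and $\alpha=\frac{a-b}{a}$, Lemma~\ref{lem:construction_of_g} delivers a continuous $\alpha$-Lipschitz function $g:X\to[0;\alpha]$ with $g^{-1}(0)=F$. Define $f=gh$; it is continuous and bounded by $1$.

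The rest of the argument now parallels the previous theorem exactly. For $x\in F$ one has $f(x)=0$, $g(u)=g(u)-g(x)$ and $h\le 1/\alpha$, giving $|f(u)-f(x)|\le|u-x|_X$, hence $\Lip f(x)\le 1$; for $x\in U=X\setminus\overline G$ the same chain of inequalities upgrades to $\LIP f(x)\le 1$. Thus $L^\infty(f)\subseteq G$ and $\mathbb{L}^\infty(f)\subseteq\overline G$. Conversely, for $x\in G$ I would use the identity
\[
f(u)-f(x)=g(x)\bigl(h(u)-h(x)\bigr)+h(u)\bigl(g(u)-g(x)\bigr),
\]
together with the bound $h(u)|g(u)-g(x)|\le\tfrac1\alpha\cdot\alpha|u-x|_X\le r$ on $B(x,r)$, to obtain $\Lip^r f(x)\ge g(x)\Lip^r h(x)-1$; since $g(x)>0$ and $\Lip h(x)=+\infty$, taking $\limsup_{r\to 0^+}$ yields $\Lip f(x)=+\infty$. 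Hence $G\subseteq L^\infty(f)$, so $L^\infty(f)=G$ and $G\subseteq\mathbb{L}^\infty(f)\subseteq\overline G$.

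The final equality $\mathbb{L}^\infty(f)=\overline G$ follows from Proposition~\ref{prp:L(f)isFsl(f)idFsd}(vi), which asserts that $\mathbb{L}^\infty(f)$ is closed. I do not expect any serious obstacle: the only delicate point is that the product estimate must transfer the blow-up of $\Lip h$ to $\Lip f$ on $G$ while not destroying the Lipschitz bound on $F\cup U$. Both aspects are handled by the same choice of $\alpha$ that worked in the previous proof, and the passage from $\lip$ to $\Lip$ is automatic because the inequality $\Lip^r f(x)\ge g(x)\Lip^r h(x)-1$ is pointwise in $r$, so taking $\limsup$ rather than $\liminf$ is equally legitimate.
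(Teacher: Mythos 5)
Your proposal is correct and matches the paper's intent exactly: the authors themselves dispose of this theorem with the single remark that one should repeat the preceding proof, replacing Theorem~\ref{thm:SmallLipTW} by Theorem~\ref{thm:BigLipTW}, and you have spelled out precisely that substitution (choosing $h$ of monotonic type so that Theorem~\ref{thm:BigLipTW} applies, and noting that the pointwise-in-$r$ inequality $\Lip^r f(x)\ge g(x)\Lip^r h(x)-1$ survives the passage from $\liminf$ to $\limsup$). The details, including the use of Proposition~\ref{prp:L(f)isFsl(f)idFsd}(vi) to upgrade $G\subseteq\mathbb{L}^\infty(f)\subseteq\overline G$ to $\mathbb{L}^\infty(f)=\overline G$, are all in order.
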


\section*{Acknowledgments}
The authors would like to appreciate Thomas Zürcher, whose talk on  the Seminar of Real Analysis in Katowice inspired us to engage in this topic.
% and the referee for his/her many helpful comments and suggestions throughout this paper.

%% If you have bib database file and want bibtex to generate the
%% bibitems, please use
%%
%%  \bibliographystyle{elsarticle-num} 
%%  \bibliography{<your bibdatabase>}

%% else use the following coding to input the bibitems directly in the
%% TeX file.

%% Refer following link for more details about bibliography and citations.
%% https://en.wikibooks.org/wiki/LaTeX/Bibliography_Management

\end{document}